\newtheorem{theorem}{Theorem}
\newtheorem{lemma}[theorem]{Lemma}
\newtheorem{definition}[theorem]{Definition}
\newtheorem{proposition}[theorem]{Proposition}
\newenvironment{proof}{\noindent\\ \noindent\relax{\sc
     Proof}}{{\samepage\par\nopagebreak
     \vspace{0mm}}}
\newcommand{\be}{\begin{equation}} \newcommand{\ee}{\end{equation}}
\newcommand{\ba}{\begin{align}} \newcommand{\ea}{\end{align}}
\newcommand{\baa}{\begin{align*}} \newcommand{\eaa}{\end{align*}}
\newcommand{\ben}{\begin{enumerate}} \newcommand{\een}{\end{enumerate}}
\newcommand{\bi}{\begin{itemize}} \newcommand{\ei}{\end{itemize}}
\newcounter{example}
\journal{Stochastic Processes and their Applications}
\begin{document}
\begin{frontmatter}

\title{Linear-fractional branching processes with countably many types} 
\author{Serik Sagitov}
\ead{serik@chalmers.se}
\address{Mathematical Sciences, Chalmers University of Technology and University of Gothenburg, Sweden}
\begin{abstract}
We study multi-type Bienaym\'e-Galton-Watson processes with linear-fractional reproduction laws using various analytical tools like contour process, spinal representation, Perron-Frobenius theorem for countable matrices, renewal theory. For this special class of branching processes with countably many types we present a transparent criterion for $R$-positive recurrence  with respect to the type space. This criterion appeals to the Malthusian parameter and the mean age at childbearing of the associated linear-fractional Crump-Mode-Jagers process.
\end{abstract}

\begin{keyword}
multivariate linear-fractional distribution, contour process, spinal representation, Bienaym\'e-Galton-Watson process, Crump-Mode-Jagers process,  Malthusian parameter, Perron-Frobenius theorem, $R$-positive recurrence, renewal theory.
\end{keyword}

\end{frontmatter}

\section{Introduction}
Branching processes is a steadily growing body of mathematical research having applications in various areas, primarily in theoretical population biology \cite{HJV}, \cite{KA}, \cite{AP}. A basic version of branching processes, called the Bienaym\'e-Galton-Watson (BGW) process, describes populations of particles which live one unit of time and at the moment of death give birth to a random number of new particles independently of each other. In the single type setting the consecutive population sizes $\{Z^{(n)}\}_{n\ge0}$ form a Markov chain with the state space $\{0,1,2,\ldots\}$. An important analytical tool for studying branching processes is the probability generating functions.  Given $\phi(s)=\mathbb E(s^{Z^{(1)}})$, the $n$-th generation's size is charaterized by the $n$-fold iteration of $\phi(\cdot)$
\[\phi^{(n)}(s)= \mathbb E(s^{Z^{(n)}})=\phi(\ldots(\phi(s))\ldots).\]
Here and elsewhere in this paper  \textit{we always assume that a branching process starts from a single particle.}

The case of linear-fractional generating functions 
\be
\phi(s)=h_0+{h_1s\over1+m-ms}\label{sit}
\ee
with $h_0\in[0,1],h_1=1-h_0$ and $m>0$ is of special interest as their iterations are again linear-fractional functions allowing for explicit calculations of various entities of importance (see \cite{AN}, p. 7). Such explicit results, although being specific, illuminate the known asymptotic results concerning more general branching processes, and, on the other hand, may bring insight into less investigated aspects of the theory of branching processes.

In the multi-type setting particles still reproduce independently but now the number of offspring may depend on the mother's type. A flexible family of population models is obtained by means of BGW-processes with countably many types (\cite{BL}, \cite{H},  \cite{KA}, \cite{ST}). These are infinitely dimensional Markov chains
\[\mathbf{Z}^{(n)}=(Z_1^{(n)},Z_2^{(n)},\ldots),\ n=0,1,2,\ldots,\]
whose $i$-th component $Z_i^{(n)}$ gives the number of particles of type $i$ existing at time $n$.
In this paper we study the class of such branching processes with the generating functions for vectors $\mathbf{Z}^{(n)}$
all being linear-fractional. As shown in Section \ref{SGLF} a linear-fractional BGW-process with countably many types  is fully specified by a triplet of parameters $(\mathbf H,\mathbf g,m)$, where $\mathbf H=(h_{ij})_{i,j=1}^\infty$ is a sub-stochastic matrix,  $\mathbf g=(g_1,g_2,\ldots)$ is a proper probability distribution, and $m$ is a positive constant. 

For a given triplet $(\mathbf H,\mathbf g,m)$ the particles in the linear-fractional BGW-process have the following reproduction law. A particle of type $i$ has no offspring with probability $h_{i0}=1-\sum_{j\ge1}h_{ij}$. Given that this particle has at least one offspring, the type of its first daughter is $j$ with probability $h_{ij}/(1-h_{i0})$, and the number of subsequent daughters has a geometric distribution with mean $m$. With the exception of the first daughter the types of all other offspring particles follow the same distribution $\mathbf g$ independently of each other and {\it independently of mother's type}. 

The countable matrix of the mean offspring numbers
\[
\mathbf{M}=(m_{ij})_{i,j=1}^\infty, \quad m_{ij}=\mathbb{E}(Z_j^{(1)}|\mathbf{Z}^{(0)}=\mathbf{e}_i),
\]
where $\mathbf e_i=(1_{\{i=1\}},1_{\{i=2\}},\ldots)$, in the linear-fractional case is found as 
\begin{align}\label{mrg}
\mathbf M=\mathbf H+m\mathbf H\mathbf 1^{\rm t} \mathbf g,
\end{align}
where $\mathbf 1^{\rm t}$ is the transpose of the row-vector $\mathbf 1^{\rm t}=(1,1,\ldots)$.
Theorem \ref{P2} in Section \ref{SGLF} states that every vector $\mathbf{Z}^{(n)}$  has a multivariate linear-fractional distribution. The generating function of $\mathbf{Z}^{(n)}$ is explicitly expressed in terms of $(\mathbf M,\mathbf g,m)$. An important step in obtaining this formula, see Section \ref{SPP2}, uses a spinal representation argument making the derivation different from that was used in \cite{JL} for the finite-dimensional case.

Assume that the type of the initial particle has distribution $\mathbf g$. Then  as shown  in Section \ref{Sm}, the total population sizes $Z^{(n)}=\mathbf{Z}^{(n)}\mathbf 1^{\rm t}$ for the linear-fractional BGW-process form a single-type discrete time Crump-Mode-Jagers (CMJ) process \cite{JS}, which we call a {\it linear-fractional CMJ-process}. This CMJ-model is not restrictive about the life length distribution, however, the point process of birth events must follow a very specific pattern: at each age a living individual produces independent and identically distributed (iid) geometric numbers of daughters. Making birth events in the linear-fractional CMJ-process very rare (by choosing the row sums of $\mathbf H$ to be close to zero) and rescaling time accordingly we arrive at a continuous time CMJ-process studied in \cite{AL}. In \cite{AL} special attention is paid to the properties of the so-called contour processes of the corresponding planar genealogical trees. The discrete time counterpart of these contour processes is the subject of Section \ref{Scont}.

Section \ref{Seps} introduces a double classification of the BGW-processes with countably many types based on the Perron-Frobenius theorem for countable matrices. Besides the usual classification into subcritical, critical, and supercritical branching processes in the case of infinitely many types one has to distinguish among $R$-transient, $R$-null recurrent, and $R$-positively  recurrent cases depending on the corresponding property of the mean matrix $\mathbf{M}$.
The main result of this paper, Theorem \ref{the1}, among other statements contains a transparent criterion for $R$-positive recurrence of a linear-fractional BGW-process. In terms of the associated CMJ-process this criterion requires that the corresponding Malthusian parameter is well defined and the mean age at childbearing is finite. Theorem \ref{the1} is proven in Section \ref{Spr} using a renewal theory approach.

In Section \ref{Sf} we present basic asymptotic results for subcritical, critical, and supercritical linear-fractional BGW-processes with countably many types in the positively  recurrent case. 
All our results for the  linear-fractional BGW-processes with parameters $(\mathbf H,\mathbf g,m)$ also apply to the case of finitely many, say $a$, types after putting $Z_i^{(0)}=0$, $g_i=0$ for $i\ge a+1$, and $h_{ij}=0$ for $1\le i\le a<j$. The transient and null recurrent cases will be addressed in a separate paper.

\section{Linear-fractional distributions}\label{SGLF}
We are using the following vector notation: $\mathbf{x}=(x_1,x_2,\ldots)$,  $\mathbf s^{\mathbf x}=s_1^{x_1}s_2^{x_2}\ldots$,
$\mathbf{0}=(0,0,\ldots)$. Let $\mathbf{x}^{\rm t}$ stand for
the transpose of the vector $\mathbf x$, and $\mathbf I$ denote the unit matrix $(1_{\{i=j\}})_{i\ge1,j\ge1}$. We denote by $\mathbb Z^\infty_+$ the set of vectors $\mathbf k$ with non-negative integer-valued components and  finite $k=\mathbf k\mathbf 1^{\rm t}$.
\begin{definition}\label{de}
Let $(h_0,h_1,h_2,\ldots)$ be a probability distribution on $\{0,1,2,\ldots\}$, $(g_1,g_2,\ldots)$ be a probability distribution on $\{1,2,\ldots\}$, and $m$ be a positive constant. Put $\mathbf h=(h_1,h_2,\ldots)$, $\mathbf g=(g_1,g_2,\ldots)$. We say that a random vector $\mathbf Z$ has  a linear-fractional distribution LF$(\mathbf h,\mathbf g,m)$ if
\[\mathbb P(\mathbf Z=\mathbf{0})=h_0,\quad \mathbb P(\mathbf Z=\mathbf k+\mathbf{e}_i)={h_im^{k}\over(1+m)^{k+1}}{k\choose k_1,k_2,\ldots}\mathbf g^{\mathbf k}\]
for all $\mathbf k\in\mathbb Z^\infty_+$, where  $k=\mathbf k\mathbf 1^{\rm t}$.
\end{definition}

The name of the distribution is explained by the linear-fractional form of its multivariate generating function
\[
\mathbb E(\mathbf s^{\mathbf Z})=h_{0}+{\sum_{i=1}^\infty h_{i}s_i\over 1+m-m\sum_{i=1}^\infty g_is_i}
\]
which is an extension of its one-dimensional version \eqref{sit}.
Slightly modifying Theorem 1 from \cite{JL} (devoted to the finite-dimensional case) one can demonstrate that Definition \ref{de} covers all possible linear-fractional probability generating functions. 
A  linear-fractional distribution is a geometric distribution modified at zero. Indeed, if $\mathbf{Z}$  has  distribution LF$(\mathbf h,\mathbf g,m)$, then it  can be represented as 
\[\mathbf{Z}=\mathbf{X}+(\mathbf Y_1+\ldots+\mathbf Y_N)\cdot1_{\{\mathbf{X}\ne\mathbf{0}\}}\]
in terms of mutually independent random entities $(\mathbf{X},N,\mathbf{Y}_1,\mathbf{Y}_2,\ldots)$. Here vectors $\mathbf{X}$ and $\mathbf{Y}_j$ have multivariate Bernoulli distributions
\begin{align*}
\mathbb{P}(\mathbf{X}=\mathbf{0})=h_0,\quad\mathbb{P}(\mathbf{X}=\mathbf{e}_i)=h_i,\quad
\mathbb{P}(\mathbf{Y}_j=\mathbf{e}_i)=g_i, \ i\ge1,\ j\ge1,
\end{align*}
and $N$ is a geometric random variable with distribution 
$$\mathbb{P}(N=k)=m^{k}(1+m)^{-k-1}, \ k\ge0.$$ 
Observe that $\mathbf{Z}$ conditionally on $\mathbf{Z}\ne\mathbf 0$ has a multivariate shifted geometric distribution 
\[
\mathbb{E}\big(\mathbf{s}^{\mathbf Z}|\mathbf{Z}\ne \mathbf{0}\big)={(1-h_{0})^{-1}\sum_{j=1}^\infty h_{j}s_j\over1+m-m\sum_{j=1}^\infty g_{j}s_j}.
\]

\begin{definition}\label{def}
Let $\mathbf H=(h_{ij})_{i,j=1}^\infty$ be a sub-stochastic matrix with rows $\mathbf h_i=(h_{i1},h_{i2},\ldots)$ having non-negative elements such that $h_{i0}:=1-h_{i1}-h_{i2}-\ldots$ take values in $[0,1]$.
Let $\mathbf g=(g_1,g_2,\ldots)$ be a probability distribution on $\{1,2,\ldots\}$, and $m$ be a positive constant. A multi-type BGW-process will be called linear-fractional with parameters $(\mathbf H,\mathbf g,m)$, if for all $i=1,2,\ldots$ particles of type $i$ reproduce according to the LF$(\mathbf h_i,\mathbf g,m)$ distribution. 
\end{definition}

Notice the strong limitation on the reproduction law requiring parameters $(\mathbf g,m)$ to be ignorant of mother's type. This is needed for the generating functions 
$$\phi_i^{(n)}(\mathbf{s})=\mathbb{E}( \mathbf{s}^{\mathbf{Z}^{(n)}}|\mathbf{Z}^{(0)}=\mathbf{e}_i),\ i=1,2,\ldots$$
to be also linear-fractional. It is easy to see that if the denominators in 
\[
\phi_i(\mathbf s)\equiv\phi_i^{(1)}(\mathbf{s})=h_{i0}+{\sum_{j=1}^\infty h_{ij}s_j\over 1+m-m\sum_{j=1}^\infty g_js_j}
\]
were different for different $i$, then the iterations of these generating functions
$\phi_i(\phi_1(\mathbf s),\phi_2(\mathbf s),\ldots)$ would lose the linear-fractional property.
\begin{theorem} \label{P2}
Consider a linear-fractional BGW-process with parameters $(\mathbf{H},\mathbf{g},m)$ starting from a type $i$ particle.  Its $n$-th generation size vector $\mathbf Z^{(n)}$ has a linear-fractional distribution LF$(\mathbf{h}_i^{(n)},\mathbf{g}^{(n)},m^{(n)})$ whose parameters satisfy
\begin{align}
m^{(n)}&=  m\sum_{k=0}^{n-1}\mathbf g\mathbf M^k\mathbf{1}^{\rm t},\label{mn}\\
m^{(n)}  {\bf g}^{(n)}&=  m\mathbf{g}(\mathbf{I}+\mathbf{M}+\dots+\mathbf{M}^{n-1}),\label{tn}\\
\mathbf{H}^{(n)}&=\mathbf{M}^{n}-{m^{(n)}\over1+m^{(n)}}\mathbf{M}^{n}\mathbf{1}^{\rm t}\mathbf{g}^{(n)},\label{rjn}
\end{align}
where $\mathbf{H}^{(n)}$ is the matrix with the rows $(\mathbf{h}_i^{(n)})_{i=1}^\infty$.
\end{theorem}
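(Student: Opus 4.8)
The plan is to argue by induction on $n$, using the branching (composition) identity $\phi_i^{(n+1)}(\mathbf s)=\phi_i^{(n)}(\boldsymbol\phi(\mathbf s))$, where $\boldsymbol\phi(\mathbf s)=(\phi_1(\mathbf s),\phi_2(\mathbf s),\ldots)$. For $n=1$ the asserted formulas must collapse to the defining data: since $\mathbf g\mathbf 1^{\rm t}=1$ one reads off $m^{(1)}=m$ and $\mathbf g^{(1)}=\mathbf g$, while \eqref{rjn} reduces to $\mathbf H^{(1)}=\mathbf M-\tfrac{m}{1+m}\mathbf M\mathbf 1^{\rm t}\mathbf g$; using $\mathbf M\mathbf 1^{\rm t}=(1+m)\mathbf H\mathbf 1^{\rm t}$, which is immediate from \eqref{mrg}, this is exactly $\mathbf H^{(1)}=\mathbf H$, so the base case is consistent. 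At the outset I would also record the elementary finiteness facts $\mathbf M\mathbf 1^{\rm t}=(1+m)\mathbf H\mathbf 1^{\rm t}\le(1+m)\mathbf 1^{\rm t}$ and hence $\mathbf M^k\mathbf 1^{\rm t}\le(1+m)^k\mathbf 1^{\rm t}$, which guarantee that every series $\mathbf g\mathbf M^k\mathbf 1^{\rm t}$ appearing in \eqref{mn}--\eqref{tn} converges and that the rearrangements of infinite sums below are legitimate.

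For the inductive step I would carry the structural statement that $\phi_i^{(n)}$ is linear-fractional with a denominator $1+m^{(n)}-m^{(n)}\mathbf g^{(n)}\mathbf s^{\rm t}$ that does \emph{not} depend on the starting type $i$. Substituting $\boldsymbol\phi(\mathbf s)$ into this common denominator and using that the inner maps $\phi_l$ all share the single denominator $1+m-m\mathbf g\mathbf s^{\rm t}$ --- here is precisely where the hypothesis that $(m,\mathbf g)$ are ignorant of the mother's type is indispensable --- the composition again possesses a single common denominator once fractions are cleared. Evaluating that denominator at $\mathbf s=\mathbf 1$ shows it already equals $1$ there, so no renormalisation is needed and its constant and linear parts can be read off directly; this yields the recursions $m^{(n+1)}=m^{(n)}+m\,\mathbf g\mathbf M^n\mathbf 1^{\rm t}$ and $m^{(n+1)}\mathbf g^{(n+1)}=m^{(n)}\mathbf g^{(n)}+m\,\mathbf g\mathbf M^n$, whose solutions with $m^{(1)}=m$, $\mathbf g^{(1)}=\mathbf g$ are exactly \eqref{mn} and \eqref{tn}.

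Rather than grind $\mathbf H^{(n)}$ out of the same composition, I would obtain \eqref{rjn} cheaply from the mean matrix. Once the structural claim is in hand, formula \eqref{mrg} applied at level $n$ says that the mean matrix of $\mathbf Z^{(n)}$ equals $\mathbf H^{(n)}+m^{(n)}\mathbf H^{(n)}\mathbf 1^{\rm t}\mathbf g^{(n)}$; on the other hand the mean matrix of the $n$-th generation of any BGW-process is $\mathbf M^n$. Equating the two and right-multiplying by $\mathbf 1^{\rm t}$, with $\mathbf g^{(n)}\mathbf 1^{\rm t}=1$, gives $\mathbf H^{(n)}\mathbf 1^{\rm t}=(1+m^{(n)})^{-1}\mathbf M^n\mathbf 1^{\rm t}$, and substituting this back into $\mathbf H^{(n)}=\mathbf M^n-m^{(n)}\mathbf H^{(n)}\mathbf 1^{\rm t}\mathbf g^{(n)}$ produces \eqref{rjn} at once.

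The main obstacle is not any single identity but the bookkeeping needed to make the induction airtight over a countable type space: one must verify that the composed denominator genuinely has the stated form with $\mathbf g^{(n)}$ a bona fide probability vector and $\mathbf H^{(n)}$ substochastic, and that all interchanges of infinite summation are justified, for which the finiteness bounds above suffice. I expect the purely algebraic route to the $\mathbf H^{(n)}$-recursion to be the most error-prone part, which is exactly why the mean-matrix shortcut is attractive; alternatively, a spinal decomposition of the linear-fractional law --- writing $\mathbf Z^{(n)}$ as one distinguished type-$l$ individual plus an independent geometric cloud of $\mathbf g^{(n)}$-distributed particles --- should allow one to read off the three parameters probabilistically and sidestep the heaviest computations.
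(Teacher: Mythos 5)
Your argument is correct in substance, but it takes a genuinely different route from the paper for the main structural claim. The paper establishes that $\mathbf Z^{(n)}$ is linear-fractional with $(\mathbf g^{(n)},m^{(n)})$ not depending on the initial type by a probabilistic argument: the Markov property of the (jumping) contour process of the planar genealogical tree shows that, given survival, the tips to the right of the leftmost tip are counted by a geometric number of iid excursions (Section \ref{Scont}); the parameters \eqref{mn} and \eqref{tn} are then read off from a spinal decomposition, $m^{(n)}\mathbf g^{(n)}$ being the mean vector of non-spinal particles at level $n$, obtained by summing over the ages $k$ at which geometric (mean $m$) side-bushes sprout to the right of the leftmost lineage (Section \ref{SPP2}). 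Your induction on the generating-function composition is instead the classical Joffe--Letac computation \cite{JL} pushed to countably many types --- precisely the derivation the paper says it deliberately replaces. Both routes obtain \eqref{rjn} identically, from the mean-matrix identity $\mathbf M^{(n)}=\mathbf M^n$ combined with \eqref{mrg} applied at level $n$. Your approach is more elementary and self-contained; its cost is the bookkeeping you anticipate: after clearing the inner denominator the composed numerator acquires a constant term $\mathbf h_i^{(n)}\mathbf h_0^{\rm t}(1+m)$ that must be folded into $h_{i0}^{(n+1)}$, and nonnegativity and normalisation of the resulting parameters must be verified, all of which your bound $\mathbf M^k\mathbf 1^{\rm t}\le(1+m)^k\mathbf 1^{\rm t}$ makes manageable. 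One small slip: with the composition order you fix, $\phi_i^{(n+1)}=\phi_i^{(n)}\circ\boldsymbol\phi$, the denominator calculation actually yields $m^{(n+1)}\mathbf g^{(n+1)}=m\mathbf g+m^{(n)}\mathbf g^{(n)}\mathbf M$ rather than the recursion you wrote (which is what the other order $\phi_i\circ\boldsymbol\phi^{(n)}$ produces); the two recursions have the same solution, so \eqref{mn} and \eqref{tn} follow either way. The paper's probabilistic route buys the spine interpretation of $m^{(n)}\mathbf g^{(n)}$ that underlies the CMJ connection exploited in later sections; yours buys independence from any tree or contour machinery.
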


Multiplying \eqref{rjn} by $\mathbf{1}^{\rm t}$ we obtain
\be\label{nex}
\mathbb{P}(\mathbf{Z}^{(n)}\ne \mathbf{0})=(1+m^{(n)})^{-1}\mathbf{M}^{n}\mathbf{1}^{\rm t},
\ee
where $\mathbb{P}(\mathbf{Z}^{(n)}\ne \mathbf{0})$ is a column vector  with elements $\mathbb{P}(\mathbf{Z}^{(n)}\ne \mathbf{0}|\mathbf{Z}^{(0)}=\mathbf{e}_i)$.
Furthermore, Theorem \ref{P2} entails that conditionally on non-extinction $\mathbf{Z}^{(n)}$ has a multivariate shifted geometric distribution 
\begin{align}
\mathbb{E}[\mathbf{s}^{\mathbf{Z}^{(n)}}|\mathbf{Z}^{(n)}\ne \mathbf{0},\mathbf{Z}^{(0)}=\mathbf{e}_i]={(1-h^{(n)}_{i0})^{-1}\sum_{j=1}^\infty h^{(n)}_{ij}s_j\over1+m^{(n)}-m^{(n)}\sum_{j=1}^\infty g^{(n)}_{j}s_j}.
\label{cor}
\end{align}

\section{The linear fractional CMJ-process}\label{Sm}
A  discrete time single type CMJ-process $\{Z^{(n)}\}_{n=0}^\infty$  describes stochastic changes in the population size for a reproduction model  with overlapping generations \cite{JS}. Compared to BGW-populations consisting of {\it particles} it is more appropriate to speak of {\it individuals} building a CMJ-population. Individuals are assumed to live and reproduce independently according to a common life law specifying a reproduction point process $(N_1,\ldots,N_L)$, where $L$ is the life length of an individual and $N_i$ is the number of its daughters produced at age $i$.
In this section we introduce a special class of such processes called  linear-fractional CMJ-processes and discuss its close connection to the class of linear-fractional multi-type BGW-processes.
\begin{definition}
 A  discrete time single type CMJ-process is called linear-fractional, if its individual life law satisfies the following property: $N_L=0$ and given $L=k$ the random variables $N_1,\ldots,N_{k-1}$ are independent and have a common geometric distribution.
\end{definition}
Given $\mathbb P(L>n)=d_n$ and $m=\mathbb E(N_1|L>1)$ a linear-fractional CMJ-process is fully characterized by a pair $(\mathbf d,m)$: the  vector $\mathbf d=(d_1,d_2,\ldots)$ with the non-negative components  satisfying $1\ge d_1\ge d_2\ge\ldots$ and the positive constant $m$.  In particular,  the total offspring number $N_1+\ldots+N_{L-1}$
 has  mean
$\mu=m(\lambda-1)$, where $\lambda=\mathbb E(L)$.

\begin{definition}\label{der}
Consider a linear-fractional CMJ-process with parameters $(\mathbf d,m)$. Let $f(s)=\sum_{n\ge1} d_ns^n$ and put
$$R_f=\inf\{s>0:f(s)=\infty\}.$$ 
If $f(R_f)\ge1/m$, we define the Malthusian parameter $\alpha$ of the CMJ-process as the unique real solution of the equation 
$mf(e^{-\alpha})=1$. If $f(R_f)<1/m$, we put $\alpha=-\infty$. 
\end{definition}

From $\lambda=1+f(1)$  we find $\mu=mf(1)$ and it easy to see that conditions $\mu<1,\mu=1,\mu>1$ are equivalent to $\alpha<0,\alpha=0,\alpha>0$.
In the framework of CMJ-processes \cite{J} a branching process is called subcritical if the Malthusian parameter  is negative $\alpha<0$, critical, if $\alpha=0$, or supercritical, if $\alpha>0$. We point out that given $\alpha>-\infty$
 \[ mf(se^{-\alpha})=\sum_{n=1}^\infty \hat d_ns^n, \ \hat d_n=md_ne^{-\alpha n},\]
  is the generating function for the so-called regeneration age of the immortal individual \cite{JS}. (Notice that in the critical case we have $\hat d_n={\mathbb P(L>n)\over\mathbb E(L)-1}$.)
The corresponding mean value
  \[
\beta=m\sum_{n=1}^\infty n d_ne^{-\alpha n}
\]
is usually called the mean age at childbearing \cite{J}. For $\alpha=-\infty$ we put $\beta=\infty$. 

\vspace{3mm}
\noindent{\bf Example 1}. 
Let $d_n=c_nn^{-k}e^{-\gamma n}$ for some constants $\gamma\ge0$, $k\ge0$, and assume $0<\liminf_nc_n<\limsup_nc_n<\infty$. Clearly, in this case $R_f=e^\gamma$ and putting $A=\sum_{n=1}^\infty c_nn^{-k}$ we get
\begin{itemize}
\item  if $A>1/m$, then $-\gamma<\alpha<\infty$ and $\beta<\infty$,
\item  if $A=1/m$, then  $\alpha=-\gamma$ and $\beta<\infty$ iff  $k>2$,
\item  if $A<1/m$, then $\alpha=-\infty$ and $\beta=\infty$.
\end{itemize}
\vspace{3mm}

It turns out that for a given triplet  $({\mathbf{H}},{\mathbf g},m)$ the corresponding linear-fractional BGW-process  $\mathbf Z^{(n)}$ can be associated with a linear-fractional CMJ-process  $Z^{(n)}=\mathbf Z^{(n)}\mathbf 1^{\rm t}$ characterized by a pair $(\mathbf d,m)$, where the vector  $\mathbf d$ has components
\be\label{qn}
d_n=\mathbf{g}\mathbf H^{n}\mathbf{1}^{\rm t},\ n\ge1.
\ee
To see this let  the initial particle of the BGW-process have distribution $\mathbf g$.
If the initial particle dies without producing any offspring we say that the initial individual in the associated CMJ-process had the life length $L=1$ and, if the initial particle produced at least one offspring we say $L>1$. Obviously, $\mathbb P(L>1)=\mathbf{g}\mathbf H\mathbf{1}^{\rm t}$. The key idea of defining the associated CMJ-process is to view an individual as a sequence of first-born descendants of a particle which itself is either the progenitor particle or a particle which is not first-born. Following this idea  given $L>1$, we say $L>2$ if the first-born daughter of the progenitor produces at least one offspring particle. This explains \eqref{qn} for $n=2$. Continuing in the same manner we see that  \eqref{qn} indeed gives us the distribution of the  life length of the progenitor individual. The fact that all daughter individuals behave in the way prescribed by the linear-fractional CMJ-model is a straightforward consequence of the particle properties of the linear-fractional BGW-process.

Turn for visual help to Figure \ref{f1}.C which gives the individual based picture of the same genealogical tree as in Figure \ref{f1}.A.  Each vertical arrowed branch in Figure \ref{f1}.C represents an individual dying before the observation time $n=5$. In particular, the initial individual lives two units of time producing two daughters: one of them lives two units of time and the other only one. We see also  that the first granddaughter of the initial individual produces two daughters at different ages.

The associated CMJ-process $Z^{(n)}$ tracks only the total number of BGW-particles at time $n$ ignoring the information on the types of the particles. To recover this information we may introduce additional labeling of individuals using the types of underlying BGW-particles. The evolution of a labeled individual over the type space can be modeled by a Markov chain whose state space $\{0,1,2,\ldots\}$ is the type space  $\{1,2,\ldots\}$ of the BGW-process augmented with a graveyard state  $\{0\}$. The transition probabilities of such a chain are given by a stochastic matrix $\mathring{\mathbf{H}}=(\mathring h_{ij})_{i,j\ge0}$ 
 with $\mathring h_{ij}=h_{ij}$ for $i\ge1$, $j\ge0$, $\mathring h_{0 j}=0$ for $j\ge1$, and $\mathring h_{00}=1$. In terms of this Markov chain the life length $L$ is the time until absorption at $\{0\}$ starting from a state  $j\in\{1,2,\ldots\}$  with probability  $g_j$. (Distributions describing absorption times in Markov chains are called {\it phase-type distributions}, see for example \cite{A}.) A labeled individual is able to visit all elements of the type space except phantom types defined next.
\begin{definition}\label{pha}
Consider a linear-fractional multi-type BGW-process with parameters  $({\mathbf{H}},{\mathbf g},m)$. If the $j$-th element of the vector $\mathbf{g}\mathbf H^{n}$ is zero for all $n\ge0$, we call $j$ a phantom type of this BGW-process.
\end{definition}

Among many pairs $({\mathbf{H}},{\mathbf g})$ resulting in the same life length distribution vector $\mathbf d$ it is worth to emphasize the next one
\[\mathbf{H}=
\left(
\begin{array}{cccccc}
0&d_1&0&0&0&\ldots\\
0&0&d_2/d_1&0&0&\ldots\\
0&0&0&d_3/d_2&0&\ldots\\
0&0&0&0&d_4/d_3&\ldots\\
\ldots&\ldots&\ldots&\ldots&\ldots&\ldots
\end{array}\right),\ \ \ \mathbf g=\mathbf e_1.
\]
For this particular choice of $({\mathbf{H}},{\mathbf g})$ it easy to verify that \eqref{qn} holds. In this case the particle type can be viewed as the age of the corresponding individual. Obviously,  there are no phantom types if $d_n>0$ for all $n\ge1$. If $d_a>0$ and $d_{a+1}=0$ for some natural $a$, then to avoid the phantom types we must restrict the state space to $\{1,\ldots,a\}$.
Observe that with $\mathbf g=\mathbf e_2$ type $j=1$ becomes a phantom type.

\begin{figure}
\centering
\includegraphics[height=4cm, width=13.8cm]{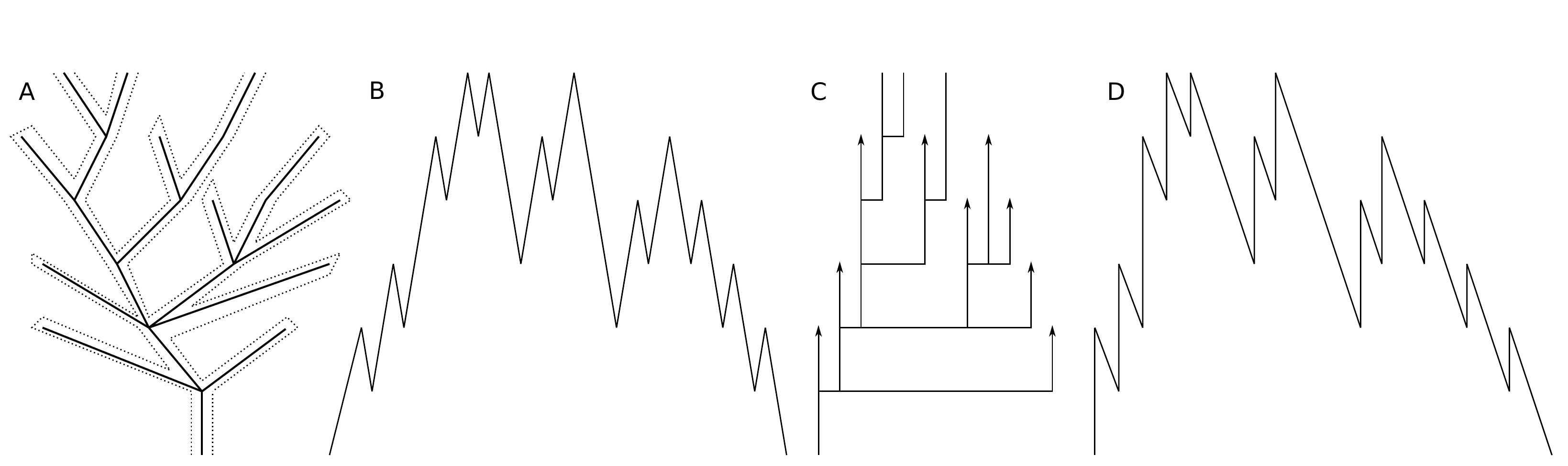}
\caption{{\bf A}: a BGW-tree (thick line) stopped at level $n=5$ and supplied with a contour  (dashed)  line. {\bf B}: the corresponding unfolded contour process.  {\bf C}: the CMJ-view of the same tree when depicted in terms of individuals forming a branching process with overlapping generations. The vertices marked by arrows represent individuals which are dead at that time. The stopped tree gives no information about the fate of the three tip vertices. {\bf D}: the modified contour process of a constant speed descent with iid upward jumps. }
\label{f1}
\end{figure}

\section{Jumping contour processes in discrete time}\label{Scont}
In this section we remind the concept of a contour process generated by a planar BGW-tree. Then we show that in the multivariate linear-fractional framework a jumping version of the contour process (in the spirit of  \cite{AL}) has a nice Markovian structure of a constant speed descent with independent and identically distributed upward jumps. In the end of this section we apply the contour process method to give a short proof of the first statement in  Theorem \ref{P2}.

Both the contour process and the spinal representation methods (the latter discussed in Section \ref{SPP2}) rely on a planar genealogical tree connecting the particles of the branching processes appeared up to the time of observation. For the current setting of linear-fractional BGW-processes it is important to use a particular planar version of the genealogical tree: given a group of siblings stemming from the same particle, the {\it leftmost branch} should connect the mother to its {\it first daughter} (that one whose type may depend on mother's type).

For a given planar tree, its contour profile is defined by the depth-first search procedure. Figure \ref{f1} illustrates the basic definition of the contour process for a finite tree supplied with a path around the tree. The contour process is simply the seesaw line graph (panel B)  representing the height of the location of a virtual car driving with a constant speed along the path outlined on the panel A. Notice that the $x$-axis in the panel A is introduced just to distinguish among different branches on the same level, hence the speed of the car is meant along the $y$-axis.
The resulting contour B of the tree A is an excursion of a random walk starting and ending at level $-1$. 
Even if the realization of the genealogical tree is infinite, one can still work with the contour processes after cutting off the branches above level $n$ corresponding to the observation time, as shown in Figure \ref{f1}. 

It is easy to reconstruct the tree on the panel A from the contour process on the panel B. As we said, the whole tree A is represented by the excursion B 
starting  and ending at the bottom level $-1$. Raising the bottom level from $-1$ to $0$ will split the tree A into 3 subtrees stemming from 3 daughters of the progenitor particle. At the same time the excursion B becomes split into 3 sub-excursions starting  and ending at level $0$. Proceeding in this way by moving up the bottom level and observing how the excursions are decomposed in sub-excursions  allows us to fully reconstruct the branching history of the original genealogical tree.

The contour process approach has proven to be very useful in the theory of branching processes  (see for example \cite{GK} and references therein). In the single type linear-fractional case \eqref{sit} the contour process has a simple structure of an alternating random walk.  Alternating upward and downward stretches have independent lengths following shifted geometric laws having mean $h_0^{-1}$ for the upward stretches and mean ${1+m\over m}$ for the other for downward stretches.

In the multi-type linear-fractional setting, one can ensure a Markov property of the contour process by introducing additional labeling of the vertices in the contour path. Each vertex will be labeled by a pair of integers $(l,i)$ with $l\ge-1$ and $i\ge0$. The current state $(l,i)$ with $l\ge0,i\ge1$ tells three things about the contour process: the current level is $l$, the last move was upward, and the underlying BGW-particle is of type $i$. If the contour process is at the vertex labeled $(l,0)$, then again its current level is $l$ but now we know that this level was  attained after a downward step. 
Such a labeled contour process (if we ignore the first compulsory move from level $-1$ to 0) can be viewed as a Markov chain with the following  transition probabilities
\begin{align*}
&\mathbb{P}\{(l,i)\to(l+1,j)\}=h_{ij}, \qquad \qquad\mathbb{P}\{(l,i)\to(l-1,0)\}=h_{i0},\\
&\mathbb{P}\{(l,0)\to(l+1,j)\}={m\over1+m}g_j, \quad \ \mathbb{P}\{(l,0)\to(l-1,0)\}={1\over1+m},\\
&\mathbb{P}\{(-1,0)\to(-1,0)\}=1,
\end{align*}
for all $i\ge1,\ j\ge1,\ l\ge0$.

The following alternative way of introducing Markovian structure in the contour process of a linear-fractional multi-type branching process does not require additional labeling. What we call here the {\it jumping contour process} (cf  \cite{AL}) has a trajectory of a constant speed descent with independent upward jumps each distributed as the individual life length $L$. The process starts from level $-1$ with an instantaneous jump and proceeds as follows.
From any given current level $l$ the jumping contour process moves one level down to $l-1$ and either settles there with probability ${1\over1+m}$ or, with probability ${m\over1+m}$, it instantaneously jumps say $k$ levels up coming to the level $k+l-1$. Figure \ref{f1}.D clearly illustrates the last construction.

Notice that  super-criticality of the underlying branching process can be identified via a positive drift for the contour process.
The drift $\lambda-1-m^{-1}$ of the jumping contour process is computed as the difference between the mean jump size $\lambda=\mathbb E(L)$ and the average length of a downward stretch $1+m^{-1}$. Clearly, the inequality $\lambda-1-m^{-1}>0$ is equivalent to $\mu>1$.

\vspace{3mm} 
\noindent{\sc Proof of Theorem \ref{P2}, part 1.} Consider a linear-fractional BGW-process with parameters $(\mathbf{H},\mathbf{g},m)$. Next we show that if the BGW-process stems from a particle of type $i$, then its vector of $n$-th generation sizes  $\mathbf Z^{(n)}$ has a linear-fractional joint distribution with some  parameters $(\mathbf{h}_i^{(n)},\mathbf{g}^{(n)},m^{(n)})$ so that $(\mathbf{g}^{(n)},m^{(n)})$ are independent of $i$. Consider the genealogical tree of the linear-fractional BGW-process stopped at level $n$ and denote by $j$ the type of the leftmost tip in the tree if any. The random vector  $\mathbf Z^{(n)}$ counts the tips of various types and  $Z^{(n)}=\mathbf Z^{(n)}\mathbf{1}^{\rm t}$ gives the total number of the tree tips. We have to verify that conditioned on $Z^{(n)}\ge1$ the following two properties hold:
\begin{enumerate}
\item the number $Z^{(n)}-1$ of the tree tips  to the right of the leftmost tip has a geometric distribution which is independent from the types $(i,j)$,
\item the types of these $Z^{(n)}-1$ tips are iid and independent from  $(i,j)$.
\end{enumerate}
These properties are simple consequences of the following Markov features of the contour process described above:
\begin{itemize}
\item  the number of particles alive at time $n$, if any, is 1 plus the number of excursions of the contour process starting at level $n$ downwards and coming back to the level $n$ escaping absorption at level $-1$,
\item the future of the contour process that just made a downward move depends only on the current level and has no memory of the earlier path.
\end{itemize}
It follows that $Z^{(n)}-1$ has a geometric  distribution whose parameter is the probability that the jumping contour process starting downwards from level $n$ will be absorbed at level $-1$ without visiting level $n$ once again.

\section{Classification of branching processes with countably many types}\label{Seps}

Multi-type BGW-processes are classified according to the asymptotic properties of the mean matrices $\mathbf{M}^{(n)}=(m_{ij}^{(n)})_{i,j=1}^\infty$ with elements
$$m_{ij}^{(n)}=\mathbb{E}(Z_j^{(n)}|\mathbf{Z}^{(0)}=\mathbf{e}_i)$$
as $n\to\infty$. The assumed independence of particles implies a recursion $\mathbf{M}^{(n)}=\mathbf{M}\mathbf{M}^{(n-1)}$, where $\mathbf{M}=\mathbf{M}^{(1)}$. It follows that  $\mathbf{M}^{(n)}=\mathbf{M}^n.$
Given that all powers $\mathbf{M}^n$ are element-wise finite (which is always true in the linear-fractional case) the asymptotic behavior of these powers is  described by the Perron-Frobenius theory for countable matrices (see Chapter 6 in \cite{ES}).  

Next we remind some crucial conclusions from this theory holding for an {\it irreducible and aperiodic} countable matrix $\mathbf{M}$.
Recall that a non-negative matrix $\mathbf M$ is called irreducible, if for any pair of indices $(i,j)$ there is a natural number  $n$ such that $m_{ij}^{(n)}>0$. The period of an index $i$ in an irreducible matrix $\mathbf M$ is defined as the greatest common divisor of all natural numbers  $n$ such that $m_{ij}^{(n)}>0$. In the irreducible case all such indices have the same period which is called the period of $\mathbf M$. When this period equals one the matrix $\mathbf M$ is called aperiodic. 

Due to Theorem 6.1 from \cite{ES} all elements of the matrix power series
$\mathbf M(s)=\sum_{n\ge0}s^n\mathbf M^n$
have a common convergence radius $0\le R<\infty$, called the convergence parameter of the matrix $\mathbf{M}$. Furthermore, one of the two alternatives holds:
\begin{itemize}
\item $R\mbox{-transient case: }\sum_{n=0}^\infty m_{ii}^{(n)}R^n<\infty, \ i\ge1,$
\item $R\mbox{-recurrent case: }\sum_{n=0}^\infty m_{ii}^{(n)}R^n=\infty, \ i\ge1.$
\end{itemize}
%
According to \cite{ES}  (Theorem 6.2 and a remark afterwards) in the $R$-recurrent case there exist unique up to constant multipliers {\it positive} vectors $\mathbf{u}$ and $\mathbf{v}$ such that
\[R\mathbf{M}\mathbf{u}^{\rm t}=\mathbf{u}^{\rm t}, \ R\mathbf{v}\mathbf{M}=\mathbf{v}.\]
Using $Rv_jm_{ji}/v_i$ one can transform the matrix $\mathbf{M}$ into a stochastic matrix.

The $R$-recurrent case is further divided in two sub-cases: $R$-null, when  $\mathbf{v}\mathbf{u}^{\rm t}=\infty$, and $R$-positive with $\mathbf{v}\mathbf{u}^{\rm t}<\infty$.
In the $R$-null case (and clearly also in the $R$-transient case)
\be\label{nu}
R^nm_{ij}^{(n)}\to0 \mbox{ for all } i,j\ge1.
\ee
In the $R$-positive case (Theorem 6.5 from \cite{ES}) one can scale the eigenvectors so that $\mathbf{v}\mathbf{u}^{\rm t}=1$ and obtain
\be\label{posi}
R^nm_{ij}^{(n)}\to u_iv_j \mbox{ for all } i,j\ge1.
\ee

These results suggest a double classification of the BGW-processes with countably many types having a mean matrix $\mathbf{M}$. The usual classification of the multi-type BGW-processes satisfying relation \eqref{posi} depends on the Perron-Frobenius eigenvalue  $\rho=1/R$. Given $\rho<1$, $\rho=1$, or $\rho>1$ the branching process is called subcritical, critical, or supercritical. In view of possibilities other than  \eqref{posi} an additional classification is needed to account for particles escaping to infinity across the type space.

\begin{definition}
A BGW-process with countably many types will be called subcritical (critical, supercritical) and transient \{recurrent, null-recurrent, positively recurrent\} in the type space, if  its matrix of the mean offspring numbers $\mathbf{M}$ has a convergence radius $R>1$ ($R=1$, $R<1$) and is $R$-transient \{$R$-recurrent, $R$-null recurrent, $R$-positively  recurrent\}.
\end{definition}

There are several published results for the BGW-processes with countably many types (see for example \cite{AK}, \cite{K}). One of them is Theorem 1 in \cite{M} dealing with the $R$-positively  recurrent  supercritical $(R<1)$ case. It states that if 
$$\sum_{i=1}^\infty v_i \mathbb E\left((\mathbf{Z}^{(1)}\mathbf{u}^{\rm t})^2|\mathbf{Z}^{(0)}=\mathbf{e}_i\right)<\infty,$$ 
then for any $\mathbf{w}$ such that $\mathbf{w}\le c\mathbf{u}$ for some positive constant $c$, the convergence 
$R^n\mathbf{Z}_n\mathbf{w}^{\rm t}\to Y\mathbf{v}\mathbf{w}^{\rm t}$
holds in mean square, where $Y\ge0$ has a finite second moment. 
This statement is cited here just to illustrate the need for finding illuminating examples of branching processes, where conditions like $R$-positive recurrence could be verified and the values of $(R,\mathbf u, \mathbf v)$ be computed in terms of the basic model parameters. 

Returning a linear-fractional branching process with parameters  $(\mathbf{H},\mathbf{g},m)$ consider the matrix $\mathbf M$ of the mean offspring numbers given by \eqref{mrg}. Clearly, irreducibility of  $\mathbf M$  prohibits the phantom types, see Difinition \ref{pha}.  The opposite is not true, if there exist so-called final types that never produce offspring, in other words, if $\mathbf H$ contains zero rows.

%

\begin{theorem}\label{the1}
The matrix  $\mathbf M$ given by \eqref{mrg} is irreducible if and only if  there are no phantom types and $\mathbf H$ does not contain zero rows. 
If  $\mathbf M$ is irreducible and aperiodic, the following three statements are valid
\begin{quote}
\begin{description}
\item[ (i)]  the convergence parameter of $\mathbf M$ is computed  as 
 \be\label{star}
 R=
\left\{
\begin{array}{ll}
e^{-\alpha},  & \mbox{ if }  \alpha>-\infty,   \\
R_f,  &  \mbox{ if }  \alpha=-\infty,    
\end{array}
\right.
\ee
using Definition \ref{der} and formula \eqref{qn} for the  components of $\mathbf d$,
\item[ (ii)]  $\mathbf M$ is $R$-recurrent if and only if $\alpha>-\infty$,
\item[ (iii)]  $\mathbf M$ is $R$-positively  recurrent if and only if $\beta<\infty$.
\end{description}
\end{quote}
In the latter case
\be\label{mnu}
R^n\mathbf M^n\to \mathbf u^{\rm t}\mathbf v,\ n\to\infty,
\ee
where element-wise positive and finite vectors $\mathbf u$ and $\mathbf v$ are given by
\begin{align}
\mathbf u^{\rm t}&=(1+m)\beta^{-1}\sum_{k=1}^\infty R^k\mathbf H^k\mathbf{1}^{\rm t},\label{muu}\\
\mathbf v&={m\over 1+m}\sum_{k=0}^\infty R^k\mathbf{g}\mathbf H^k\label{muv},
\end{align}
and satisfy $\mathbf v\mathbf u^{\rm t}=\mathbf v\mathbf 1^{\rm t}=1$ as well as  $\mathbf g\mathbf u^{\rm t}={1+m\over m\beta}$.
 
\end{theorem}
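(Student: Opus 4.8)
The plan is to exploit the rank-one structure of the mean matrix. Writing \eqref{mrg} as $\mathbf M=\mathbf H+m(\mathbf H\mathbf 1^{\rm t})\mathbf g$, the second summand is the outer product of the column $\mathbf H\mathbf 1^{\rm t}$ of row-sums with the row $\mathbf g$. I would first dispose of the irreducibility criterion. For necessity, a zero row of $\mathbf H$ forces a zero row of $\mathbf M$, and a phantom type $j$ (Definition \ref{pha}) makes $j$ unreachable: expanding $\mathbf M^n$ as words in $\mathbf H$ and $m(\mathbf H\mathbf 1^{\rm t})\mathbf g$, every word containing at least one rank-one factor ends in a segment $\mathbf g\mathbf H^{a}$ whose $j$-th entry vanishes, so $(\mathbf M^n)_{ij}=(\mathbf H^n)_{ij}$; taking $i$ in the support of $\mathbf g$ and using $(\mathbf g\mathbf H^n)_j=0$ shows $(\mathbf M^n)_{ij}=0$ for all $n$. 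For sufficiency, the absence of zero rows gives $(\mathbf H\mathbf 1^{\rm t})_i>0$, hence a one-step transition $i\to\mathrm{supp}(\mathbf g)$ through the rank-one term, and the no-phantom condition provides, for each $j$, an index in $\mathrm{supp}(\mathbf g)$ from which $\mathbf H^{n_j}$ reaches $j$; concatenating yields $(\mathbf M^{1+n_j})_{ij}>0$.

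The analytic core is the generating function $\mathbf M(s)=\sum_{n\ge0}s^n\mathbf M^n$. Setting $\mathbf H(s):=\sum_{k\ge0}s^k\mathbf H^k$ and using \eqref{qn}, which give $\mathbf g\mathbf H(s)\mathbf 1^{\rm t}=1+f(s)$ and $\mathbf g\mathbf H(s)(\mathbf H\mathbf 1^{\rm t})=f(s)/s$, a Sherman--Morrison computation (legitimate term-by-term for $s\ge0$ as long as $mf(s)<1$, all series having non-negative entries) yields the resolvent identity
\[
(\mathbf M(s))_{ij}=(\mathbf H(s))_{ij}+\frac{sm\,(\mathbf H(s)\mathbf H\mathbf 1^{\rm t})_i\,(\mathbf g\mathbf H(s))_j}{1-mf(s)},
\]
and, after summing, the scalar identity $\mathbf g\mathbf M(s)\mathbf 1^{\rm t}=\frac{1+f(s)}{1-mf(s)}$. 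To read off (i)--(ii) I would fix a reference type $q$ with $g_q>0$ and study the diagonal entry $(\mathbf M(s))_{qq}$, whose radius of convergence is the common convergence parameter $R$. When $\alpha>-\infty$ the denominator vanishes first at $s=e^{-\alpha}$, where $mf=1$; all three $\mathbf H$-factors stay finite there (indeed $(\mathbf g\mathbf H(e^{-\alpha}))_q\le1+f(e^{-\alpha})=1+1/m$, and the column factor is controlled by the irreducibility estimate $(\mathbf H^k\mathbf 1^{\rm t})_q\le c_q^{-1}d_{n_q+k}$ coming from $(\mathbf g\mathbf H^{n_q})_q=c_q>0$), so the entry has a genuine pole, giving $R=e^{-\alpha}$ and $\sum_n R^n m_{qq}^{(n)}=\infty$, i.e. $R$-recurrence. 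When $\alpha=-\infty$ one has $f(R_f)<1/m<\infty$, the denominator stays positive, all factors are finite at $R_f$, so $\sum_n R_f^n m_{qq}^{(n)}<\infty$, forcing $R\ge R_f$ and $R$-transience; the matching bound $R\le R_f$ comes from the two-regeneration estimate $(\mathbf M^{n+2})_{qq}\ge m^2(\mathbf H\mathbf 1^{\rm t})_q\,g_q\,d_{n+1}$ combined with $\limsup_n d_n^{1/n}=R_f^{-1}$.

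For (iii) and the eigenvectors I would, in the recurrent case $\alpha>-\infty$ (so $R=e^{-\alpha}$, $mf(R)=1$), exhibit the unnormalized candidates $\tilde{\mathbf u}^{\rm t}=\sum_{k\ge1}R^k\mathbf H^k\mathbf 1^{\rm t}$ and $\tilde{\mathbf v}=\sum_{k\ge0}R^k\mathbf g\mathbf H^k$. A direct telescoping using $mf(R)=1$ verifies $R\mathbf M\tilde{\mathbf u}^{\rm t}=\tilde{\mathbf u}^{\rm t}$ and $R\tilde{\mathbf v}\mathbf M=\tilde{\mathbf v}$; finiteness of $\tilde{\mathbf u}$ follows from the same irreducibility estimate, $\tilde{\mathbf v}$ is even summable since $\tilde{\mathbf v}\mathbf 1^{\rm t}=1+f(R)=1+1/m$, and positivity is guaranteed by the absence of zero rows and phantom types. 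By the uniqueness of positive eigenvectors in the $R$-recurrent case \cite{ES}, these are the Perron--Frobenius directions. The decisive computation is $\tilde{\mathbf v}\tilde{\mathbf u}^{\rm t}=\sum_{n\ge1}nR^nd_n=\beta/m$, so $\mathbf v\mathbf u^{\rm t}<\infty$ exactly when $\beta<\infty$, which by the $R$-positivity criterion of \cite{ES} proves (iii) (and $\beta=\infty$ gives $\mathbf v\mathbf u^{\rm t}=\infty$, hence $R$-null recurrence and \eqref{nu}). Rescaling to $\mathbf v\mathbf 1^{\rm t}=1$ forces the factor $\tfrac{m}{1+m}$ in \eqref{muv}, and then $\mathbf v\mathbf u^{\rm t}=1$ forces $\tfrac{1+m}{\beta}$ in \eqref{muu}; the identities $\mathbf v\mathbf u^{\rm t}=\mathbf v\mathbf 1^{\rm t}=1$ and $\mathbf g\mathbf u^{\rm t}=\tfrac{1+m}{m\beta}$ follow from $f(R)=1/m$ and $\sum_n nR^nd_n=\beta/m$, while \eqref{mnu} is the limit \eqref{posi} for these normalized vectors. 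The same constant surfaces through renewal theory: rescaling the scalar identity gives $R^n\mathbf g\mathbf M^n\mathbf 1^{\rm t}=\tfrac{1+m}{m}\hat u_n$ for $n\ge1$, where $\hat u_n$ is the renewal sequence of the aperiodic law $\hat d_n=md_ne^{-\alpha n}$, and the discrete renewal theorem gives $\hat u_n\to1/\beta$ when $\beta<\infty$.

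The step I expect to be most delicate is matching the \emph{entrywise} convergence parameter $R$ of \cite{ES} to the singularity of the scalar transform: a priori the $\mathbf g$-averaged series $\mathbf g\mathbf M(s)\mathbf 1^{\rm t}$ could have a smaller radius than a single diagonal entry because mass may escape across the infinite type space. This is precisely why I route everything through the fixed entry $(\mathbf M(s))_{qq}$, and why the two-regeneration lower bound $(\mathbf M^{n+2})_{qq}\ge m^2(\mathbf H\mathbf 1^{\rm t})_q\,g_q\,d_{n+1}$ is needed: it forces the return growth at $q$ to match the lifetime-tail growth rate $R_f^{-1}$. Secondary care is required to justify the term-by-term Sherman--Morrison rearrangement for countable matrices (Tonelli, valid while $mf(s)<1$) and to handle the boundary case $e^{-\alpha}=R_f$, where numerator and denominator degenerate simultaneously but the denominator's simple zero still dominates.
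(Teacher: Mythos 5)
Your proposal is correct, and its backbone coincides with the paper's: the rank-one (Sherman--Morrison) resolvent identity for $\mathbf M(s)=\sum_{n\ge0}s^n\mathbf M^n$ is exactly Lemma \ref{mps} (which the paper derives by iterating $\mathbf M^{n}=\mathbf H^n+m\sum_{i=1}^n\mathbf H^i\mathbf 1^{\rm t}\mathbf g\,\mathbf M^{n-i}$ rather than by a resolvent update, but the formula and the Tonelli-type justification are the same), the scalar identity $\mathbf g\mathbf M(s)\mathbf 1^{\rm t}=(1+f(s))/(1-mf(s))$ is \eqref{mysli}, and the no-phantom-type estimate for the finiteness of $\mathbf H(R)$ and the treatment of irreducibility and of parts (i)--(ii) match the paper's. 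Where you genuinely diverge is part (iii) and the limit \eqref{mnu}: you exhibit $\tilde{\mathbf u}^{\rm t}=\sum_{k\ge1}R^k\mathbf H^k\mathbf 1^{\rm t}$ and $\tilde{\mathbf v}=\sum_{k\ge0}R^k\mathbf g\mathbf H^k$ as exact positive $R$-invariant vectors, compute $\tilde{\mathbf v}\tilde{\mathbf u}^{\rm t}=\sum_{n\ge1}nR^nd_n=\beta/m$, and invoke the Seneta--Vere-Jones dichotomy ($R$-positivity iff $\mathbf v\mathbf u^{\rm t}<\infty$) together with the general convergence \eqref{posi}. The paper never verifies the eigenvector equations: it applies the renewal theorem (Lemma \ref{Fe}) once to the scalar series to get $R^n\mathbf g\mathbf M^n\mathbf 1^{\rm t}\to(1+m)/(m\beta)$ and then element-wise to $\sum_n\mathbf B_ns^n={m\over1-mf(Rs)}(\mathbf H(Rs)-\mathbf I)\mathbf 1^{\rm t}\mathbf g\mathbf H(Rs)$, obtaining $\mathbf B_n\to\mathbf u^{\rm t}\mathbf v$ directly, from which \eqref{mnu}, the $R$-positive/$R$-null dichotomy, and the identification of $\mathbf u,\mathbf v$ all drop out at once. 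Your route buys a structural explanation of where $\beta$ comes from (it is $m\,\tilde{\mathbf v}\tilde{\mathbf u}^{\rm t}$) at the cost of leaning on the uniqueness of subinvariant vectors and on \eqref{posi}; the paper's route is self-contained for \eqref{mnu}. Two small remarks: your two-regeneration lower bound forcing $R\le R_f$ in the transient case is sound but redundant, since Lemma \ref{mps} already makes $\mathbf M(s)$ element-wise infinite whenever $mf(s)\ge1$, which covers every $s>R_f$; and both your argument and the paper's tacitly use that aperiodicity of $\mathbf M$ makes the law $\hat d_n=md_nR^n$ aperiodic, which is what the renewal theorem actually requires.
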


\noindent{\bf Example 2}.
Assume that for some positive constant $r$ the pair $(\mathbf H,\mathbf g)$ satisfies one or both of the following conditions 
\begin{enumerate}
\item $\mathbf g\mathbf H=r\mathbf g$ so that $\mathbf g\mathbf M=(1+m)r\mathbf g$ and  $ \mathbf v=\mathbf g$,
\item $\mathbf H\mathbf 1^{\rm t}=r\mathbf 1^{\rm t}$ so that  $\mathbf M\mathbf 1^{\rm t}=(1+m)r\mathbf 1^{\rm t}$ and $\mathbf u=\mathbf 1$ . 
\end{enumerate}
We have necessarily $r\le1$ since
$r=r\mathbf g\mathbf 1^{\rm t}=\mathbf g\mathbf H\mathbf 1^{\rm t}\le\mathbf g1^{\rm t}=1$.
In both cases we obtain $\rho=(1+m)r,\ \beta={1+m\over m}$, and $\mathbb{P}(L>n)=r^n$. Notice that  $\mathbb{P}(L=\infty)=1$ for $r=1$.\\

\section{$R$-positively  recurrent case}\label{Sf}
Consider a linear-fractional BGW-process with an irreducible and aperiodic  $\mathbf{M}$ assuming $\beta<\infty$. In this case according to Theorem \ref{the1} we have $\mathbf M^n\sim \rho^n\mathbf{u}^{\rm t}\mathbf{v}$,  
where $\rho=R^{-1}=e^\alpha$. It follows that the left eigenvector $\mathbf{v}$ describes the stable type distribution: $\mathbf e_i\mathbf M^n\sim u_i\rho^n\mathbf v$,
and the right eigenvector $\mathbf{u}$ compares productivity of different types: $\mathbf M^n\mathbf 1^{\rm t}\sim \rho^n\mathbf u^{\rm t}$ (so that $u_i$ can be interpreted as the ``reproductive value" of type $i$).
The next three propositions present basic asymptotic results 
for the linear-fractional BGW-processes extending similar statements for the finite-dimensional case obtained in \cite{JL} and \cite{P}. 

\begin{proposition}\label{p1} In the subcritical positively recurrent case when  $\rho<1$, or equivalently $\mu<1$,
\be
\mathbb{P}\big(\mathbf{Z}^{(n)}\ne \mathbf{0}\big)\sim\rho^n(1+m)^{-1}(1-\mu)\mathbf u^{\rm t}.
\label{epr}
\ee
Furthermore, for any initial type $i$ we get
\begin{align*}
\mathbb P\big(\mathbf{Z}^{(n)}=\mathbf k|\mathbf{Z}^{(n)}\ne \mathbf{0},\mathbf{Z}^{(0)}=\mathbf{e}_i\big)\to \mathbb P(\mathbf Y=\mathbf k) \mbox{ for all $\mathbf k\in\mathbb Z^\infty_+$},
\end{align*}
where $\mathbf Y$ has a distribution LF$(\tilde{\mathbf h},\tilde{\mathbf g},\tilde m)$ with $\tilde m=m\lambda(1-\mu)^{-1}$,
\begin{align*}
\tilde{\mathbf h}&=(1+m)(1-\mu)^{-1}\mathbf v-m\mathbf g(\mathbf{I}-\mathbf{M})^{-1},\qquad \tilde{\mathbf h}\mathbf 1^{\rm t}=1,\\ 
\tilde{\mathbf g}&=\lambda^{-1}(1-\mu)\mathbf g(\mathbf{I}-\mathbf{M})^{-1}.
\end{align*}
\end{proposition}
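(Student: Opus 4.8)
The plan is to extract everything from the explicit linear-fractional description of $\mathbf{Z}^{(n)}$ in Theorem \ref{P2}, fed by the Perron--Frobenius asymptotics of Theorem \ref{the1}. The guiding observation is that subcriticality ($\rho<1$) makes the scalar $m^{(n)}$ converge to a finite limit, while the vector $\mathbf{M}^n\mathbf{1}^{\rm t}$ decays geometrically like $\rho^n\mathbf{u}^{\rm t}$, as already recorded at the start of this section.

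For the tail estimate \eqref{epr} I would start from \eqref{nex}, which reads $\mathbb{P}(\mathbf{Z}^{(n)}\ne\mathbf{0})=(1+m^{(n)})^{-1}\mathbf{M}^n\mathbf{1}^{\rm t}$. The numerator is handled by $\mathbf{M}^n\mathbf{1}^{\rm t}\sim\rho^n\mathbf{u}^{\rm t}$. For the denominator I would show that $m^{(n)}=m\sum_{k=0}^{n-1}\mathbf{g}\mathbf{M}^k\mathbf{1}^{\rm t}$ increases to the finite limit $\tilde m=m\,\mathbf{g}(\mathbf{I}-\mathbf{M})^{-1}\mathbf{1}^{\rm t}$: the partial sums are non-negative and monotone, so convergence follows once this scalar is shown finite. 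Combining the two gives $\mathbb{P}(\mathbf{Z}^{(n)}\ne\mathbf{0})\sim(1+\tilde m)^{-1}\rho^n\mathbf{u}^{\rm t}$, and it then remains to verify the arithmetic identity $(1+\tilde m)^{-1}=(1+m)^{-1}(1-\mu)$, which collapses to the defining relation $\mu=m(\lambda-1)$.

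For the Yaglom-type limit I would pass to the limit in the conditional generating function \eqref{cor}, treating its three parameters separately. The limit $m^{(n)}\to\tilde m$ is already in hand. From \eqref{tn}, $m^{(n)}\mathbf{g}^{(n)}=m\mathbf{g}(\mathbf{I}+\cdots+\mathbf{M}^{n-1})\to m\mathbf{g}(\mathbf{I}-\mathbf{M})^{-1}$, whence $\mathbf{g}^{(n)}\to\tilde{\mathbf{g}}=\tilde m^{-1}m\mathbf{g}(\mathbf{I}-\mathbf{M})^{-1}$, the stated value. For the numerator I would combine \eqref{rjn} and \eqref{nex} into
\[
\frac{h_{ij}^{(n)}}{1-h_{i0}^{(n)}}=(1+m^{(n)})\frac{m_{ij}^{(n)}}{(\mathbf{M}^n\mathbf{1}^{\rm t})_i}-m^{(n)}g_j^{(n)},
\]
where the ratio $m_{ij}^{(n)}/(\mathbf{M}^n\mathbf{1}^{\rm t})_i$ tends to $v_j$ by \eqref{mnu}; the common factors $\rho^n$ and $u_i$ cancel, which is precisely why the limit is free of the initial type $i$. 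Hence $h_{ij}^{(n)}/(1-h_{i0}^{(n)})\to(1+\tilde m)v_j-\tilde m\tilde g_j$, and a short computation using $1+\tilde m=(1+m)(1-\mu)^{-1}$ identifies the right-hand side with $\tilde h_j$. Since for each fixed $\mathbf{k}$ the conditional probability in Definition \ref{de} is a continuous function of finitely many of these parameters, pointwise convergence of the parameters yields $\mathbb{P}(\mathbf{Z}^{(n)}=\mathbf{k}\mid\mathbf{Z}^{(n)}\ne\mathbf{0},\mathbf{Z}^{(0)}=\mathbf{e}_i)\to\mathbb{P}(\mathbf{Y}=\mathbf{k})$; the identities $\tilde{\mathbf{h}}\mathbf{1}^{\rm t}=\tilde{\mathbf{g}}\mathbf{1}^{\rm t}=1$ confirm that the limit is a genuine LF distribution.

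The main obstacle is evaluating the scalar $\mathbf{g}(\mathbf{I}-\mathbf{M})^{-1}\mathbf{1}^{\rm t}$ in closed form, so as to pin down $\tilde m=m\lambda(1-\mu)^{-1}$ and, through it, the clean forms of $\tilde{\mathbf{h}}$ and $\tilde{\mathbf{g}}$. Here I would exploit the rank-one structure $\mathbf{M}=\mathbf{H}+m\mathbf{H}\mathbf{1}^{\rm t}\mathbf{g}$ and apply the Sherman--Morrison identity to reduce $(\mathbf{I}-\mathbf{M})^{-1}$ to $(\mathbf{I}-\mathbf{H})^{-1}$, after which the two scalars $\mathbf{g}(\mathbf{I}-\mathbf{H})^{-1}\mathbf{1}^{\rm t}=\lambda$ and $m\mathbf{g}(\mathbf{I}-\mathbf{H})^{-1}\mathbf{H}\mathbf{1}^{\rm t}=\mu$ follow from $d_n=\mathbf{g}\mathbf{H}^n\mathbf{1}^{\rm t}$, $\lambda=1+f(1)$, and $\mu=mf(1)$. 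The point requiring care is the validity and finiteness of these Neumann series in the infinite-dimensional setting: this is secured by subcriticality, which gives convergence parameter $R>1$ for $\mathbf{M}$ and hence for the dominated matrix $\mathbf{H}$, so all operators and scalars above are well defined and the monotone-bounded argument legitimises the passage $n\to\infty$.
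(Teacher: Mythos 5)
Your proposal is correct and follows essentially the same route as the paper: both rest on \eqref{nex} and \eqref{cor} from Theorem \ref{P2}, the convergence $m^{(n)}\to\tilde m$ and $m^{(n)}\mathbf g^{(n)}\to m\mathbf g(\mathbf I-\mathbf M)^{-1}$, and the Perron--Frobenius asymptotics \eqref{mnu} for the numerator. The only cosmetic difference is that you re-derive the scalar identity $m\,\mathbf g(\mathbf I-\mathbf M)^{-1}\mathbf 1^{\rm t}=m\lambda(1-\mu)^{-1}$ via a Sherman--Morrison reduction, whereas the paper simply cites \eqref{mysli} from Lemma \ref{mps}, which encodes exactly that rank-one computation.
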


\begin{proposition}\label{p2}  In the critical positively recurrent case when   $\rho=1$ we have
\[\mathbb{P}(\mathbf{Z}^{(n)}\ne \mathbf{0})\sim\ n^{-1}(1+m)^{-1}\beta\mathbf u^{\rm t}.\]
If a vector $\mathbf{w}$ has bounded components $(\sup_{j\ge1}|w_j|<\infty)$ and $\mathbf{v}\mathbf{w}^{\rm t}>0$, then  for all $x>0$ and $i\ge1$
\begin{align*}
\mathbb P\big(\mathbf{Z}^{(n)}\mathbf{w}^{\rm t}>nx|\mathbf{Z}^{(n)}\ne \mathbf{0},\mathbf{Z}^{(0)}=\mathbf{e}_i\big)\to e^{-x/c_w}, \quad c_w=
(1+m)\beta^{-1}\mathbf{v}\mathbf{w}^{\rm t}.
\end{align*}
In other words, conditionally  on non-extinction $n^{-1}\mathbf{Z}^{(n)}$ weakly converges to $X\mathbf{v}$, where  $X$ is exponentially distributed with mean $(1+m)\beta^{-1}$. 
\end{proposition}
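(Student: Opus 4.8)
The plan is to read both assertions off the exact linear-fractional description of $\mathbf{Z}^{(n)}$ in Theorem \ref{P2}, feeding in the Perron--Frobenius asymptotics of Theorem \ref{the1} specialised to the critical value $R=1$ (equivalently $\rho=1$), where \eqref{mnu} reads $\mathbf{M}^n\to\mathbf{u}^{\rm t}\mathbf{v}$ element-wise and the identities $\mathbf{v}\mathbf{1}^{\rm t}=1$, $\mathbf{g}\mathbf{u}^{\rm t}=\tfrac{1+m}{m\beta}$ are at hand. I would first treat the extinction probability through \eqref{nex}: its numerator satisfies $\mathbf{M}^n\mathbf{1}^{\rm t}\to\mathbf{u}^{\rm t}\mathbf{v}\mathbf{1}^{\rm t}=\mathbf{u}^{\rm t}$, while in \eqref{mn} each summand $\mathbf{g}\mathbf{M}^k\mathbf{1}^{\rm t}$ converges to $\mathbf{g}\mathbf{u}^{\rm t}=\tfrac{1+m}{m\beta}$, so a Ces\`aro argument gives $m^{(n)}\sim mn\cdot\tfrac{1+m}{m\beta}=\tfrac{(1+m)n}{\beta}$. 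Dividing then yields $\mathbb{P}(\mathbf{Z}^{(n)}\ne\mathbf{0})\sim n^{-1}(1+m)^{-1}\beta\,\mathbf{u}^{\rm t}$, which is the first display.

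For the limit law I would pass to Laplace transforms. Substituting $s_j=e^{-\theta w_j/n}$ with $\theta>0$ into the conditional generating function \eqref{cor} turns its left-hand side into the Laplace transform of $\mathbf{Z}^{(n)}\mathbf{w}^{\rm t}/n$, and the goal becomes to show that this tends to $(1+\theta c_w)^{-1}$, the Laplace transform of an exponential variable with mean $c_w$; the continuity theorem then delivers $\mathbb{P}(\mathbf{Z}^{(n)}\mathbf{w}^{\rm t}>nx\mid\mathbf{Z}^{(n)}\ne\mathbf{0},\mathbf{Z}^{(0)}=\mathbf{e}_i)\to e^{-x/c_w}$. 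The numerator of \eqref{cor} is a convex combination of the factors $e^{-\theta w_j/n}$, each within $\theta(\sup_j|w_j|)n^{-1}$ of $1$, so it tends to $1$ with no further argument. The denominator equals $1+m^{(n)}\sum_j g_j^{(n)}(1-e^{-\theta w_j/n})$; linearising $1-e^{-\theta w_j/n}=\theta w_jn^{-1}+O(n^{-2})$ uniformly in $j$ (by boundedness of $\mathbf{w}$) rewrites it as $1+\tfrac{\theta m^{(n)}}{n}\mathbf{g}^{(n)}\mathbf{w}^{\rm t}+m^{(n)}O(n^{-2})$, where the remainder is $O(n^{-1})$ since $m^{(n)}=O(n)$. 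Using $m^{(n)}/n\to\tfrac{1+m}{\beta}$ from the previous step, everything reduces to identifying $\lim_n\mathbf{g}^{(n)}\mathbf{w}^{\rm t}$.

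The step I expect to be the main obstacle is precisely this last limit, $\mathbf{g}^{(n)}\mathbf{w}^{\rm t}\to\mathbf{v}\mathbf{w}^{\rm t}$: Theorem \ref{the1} only supplies element-wise convergence $\mathbf{M}^k\to\mathbf{u}^{\rm t}\mathbf{v}$, whereas $\mathbf{w}$ is merely bounded and not summable, so the pairing cannot be taken term by term. To bridge the gap I would combine \eqref{tn} with the Ces\`aro averaging of $(\mathbf{g}\mathbf{M}^k)_j\to\tfrac{1+m}{m\beta}v_j$ to obtain $g_j^{(n)}\to v_j$ for each fixed $j$; since every $\mathbf{g}^{(n)}$ and the limit $\mathbf{v}$ are genuine probability vectors (here $\mathbf{v}\mathbf{1}^{\rm t}=1$ is crucial), Scheff\'e's lemma upgrades this pointwise convergence to total-variation convergence, whence pairing with the bounded vector $\mathbf{w}$ gives $\mathbf{g}^{(n)}\mathbf{w}^{\rm t}\to\mathbf{v}\mathbf{w}^{\rm t}$. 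The same renewal-theoretic limit underlies the mass convergence $\mathbf{g}\mathbf{M}^k\mathbf{1}^{\rm t}\to\mathbf{g}\mathbf{u}^{\rm t}$ used in the first paragraph. Feeding this back, the denominator tends to $1+\theta\tfrac{1+m}{\beta}\mathbf{v}\mathbf{w}^{\rm t}=1+\theta c_w$, completing the transform computation; the hypothesis $\mathbf{v}\mathbf{w}^{\rm t}>0$ ensures $c_w>0$, so the limit is non-degenerate. Finally, since the limiting transform factorises as that of $X\,\mathbf{v}\mathbf{w}^{\rm t}$ for a single exponential variable $X$ with mean $(1+m)\beta^{-1}$ not depending on $\mathbf{w}$, the one-dimensional projections are consistent with the asserted weak convergence $n^{-1}\mathbf{Z}^{(n)}\Rightarrow X\mathbf{v}$.
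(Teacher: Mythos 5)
Your proposal is correct and follows essentially the same route as the paper: the survival probability is read off \eqref{nex} together with the asymptotics $m^{(n)}\sim n(1+m)\beta^{-1}$ (which the paper gets from Lemma \ref{Fe} applied to \eqref{mn} and \eqref{mysli}, you by Ces\`aro averaging of the same renewal limit), and the limit law comes from the same transform computation on \eqref{cor} using $m^{(n)}\mathbf{g}^{(n)}\sim n(1+m)\beta^{-1}\mathbf{v}$. The one point where you go beyond the paper is the Scheff\'e argument upgrading the coordinatewise convergence $g_j^{(n)}\to v_j$ of probability vectors to $\ell^1$-convergence before pairing with the merely bounded, non-summable $\mathbf{w}$; the paper leaves this interchange implicit, so this is a worthwhile clarification rather than a different method.
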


\begin{proposition}\label{p3}  In the supercritical positively recurrent case when  $\rho>1$
\[\mathbb{P}(\mathbf{Z}^{(n)}\ne \mathbf{0})\to\ (\rho-1)(1+m)^{-1}\beta\mathbf u^{\rm t}.\]
Furthermore,  for any $\mathbf{w}$ with bounded components and $\mathbf{v}\mathbf{w}^{\rm t}>0$ 
\begin{align*}
\mathbb P\big(\mathbf{Z}^{(n)}\mathbf{w}^{\rm t}>\rho^nx|\mathbf{Z}^{(n)}\ne \mathbf{0},\mathbf{Z}^{(0)}=\mathbf{e}_i\big)\to e^{-x(\rho-1)/c_w},\quad x>0.
\end{align*}
\end{proposition}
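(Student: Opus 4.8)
The plan is to extract both statements from the two exact identities already available for $R$-positively recurrent linear-fractional processes: the extinction formula \eqref{nex} and the conditional generating function \eqref{cor}. Everything reduces to knowing the $\rho^n$-asymptotics of the scalar quantities $m^{(n)}$, $\mathbf{M}^n\mathbf{1}^{\rm t}$ and $m^{(n)}\mathbf{g}^{(n)}\mathbf{w}^{\rm t}$, so I would settle these first. Since $\rho>1$, the Perron--Frobenius limit \eqref{mnu} gives $\rho^{-k}\mathbf{g}\mathbf{M}^k\mathbf{1}^{\rm t}\to\mathbf{g}\mathbf{u}^{\rm t}$ (using $\mathbf{v}\mathbf{1}^{\rm t}=1$) and, more generally, $\rho^{-k}\mathbf{g}\mathbf{M}^k\mathbf{w}^{\rm t}\to(\mathbf{g}\mathbf{u}^{\rm t})(\mathbf{v}\mathbf{w}^{\rm t})$. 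Summing the geometric series in \eqref{mn} and \eqref{tn} and inserting $\mathbf{g}\mathbf{u}^{\rm t}=\frac{1+m}{m\beta}$ then yields
\baa
m^{(n)}\sim\frac{(1+m)\rho^n}{\beta(\rho-1)}\qquad\text{and}\qquad \rho^{-n}m^{(n)}\mathbf{g}^{(n)}\mathbf{w}^{\rm t}\to\frac{c_w}{\rho-1},
\eaa
whereas \eqref{mnu} directly gives $\rho^{-n}\mathbf{M}^n\mathbf{1}^{\rm t}\to\mathbf{u}^{\rm t}$. Feeding the first and third limits into \eqref{nex}, the $\rho^n$ factors cancel and leave $\mathbb{P}(\mathbf{Z}^{(n)}\ne\mathbf{0})\to(\rho-1)(1+m)^{-1}\beta\mathbf{u}^{\rm t}$, which is the first assertion.

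For the limiting law I would argue through characteristic functions rather than through moment recursions. Fixing $\theta\in\mathbb{R}$ and substituting $s_j=e^{i\theta\rho^{-n}w_j}$ into \eqref{cor} produces precisely $\mathbb{E}[\exp(i\theta\rho^{-n}\mathbf{Z}^{(n)}\mathbf{w}^{\rm t})\mid\mathbf{Z}^{(n)}\ne\mathbf{0},\mathbf{Z}^{(0)}=\mathbf{e}_i]$. Because $\sup_j|w_j|<\infty$ and $\rho^{-n}\to0$, the exponentials admit a first-order expansion uniform in $j$: the numerator $(1-h^{(n)}_{i0})^{-1}\sum_j h^{(n)}_{ij}s_j$ stays within $|\theta|\rho^{-n}\sup_j|w_j|$ of $1$, hence tends to $1$, while in the denominator $1-\sum_j g^{(n)}_j s_j=-i\theta\rho^{-n}\mathbf{g}^{(n)}\mathbf{w}^{\rm t}+O(\rho^{-2n})$. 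Multiplying by $m^{(n)}$ and using $\rho^{-n}m^{(n)}\mathbf{g}^{(n)}\mathbf{w}^{\rm t}\to c_w/(\rho-1)$ shows the denominator converges to $1-i\theta c_w/(\rho-1)$, the remainder being controlled by $\rho^{-2n}m^{(n)}=O(\rho^{-n})\to0$. Thus the conditional characteristic function tends to $(1-i\theta c_w/(\rho-1))^{-1}$, the transform of an exponential variable with mean $c_w/(\rho-1)$, which is positive since $\mathbf{v}\mathbf{w}^{\rm t}>0$. By the continuity theorem $\rho^{-n}\mathbf{Z}^{(n)}\mathbf{w}^{\rm t}$ converges in distribution to this exponential law, and since the limit is continuous its tail converges at every $x>0$, giving $\mathbb{P}(\mathbf{Z}^{(n)}\mathbf{w}^{\rm t}>\rho^n x\mid\mathbf{Z}^{(n)}\ne\mathbf{0},\mathbf{Z}^{(0)}=\mathbf{e}_i)\to e^{-x(\rho-1)/c_w}$.

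The hard part will be the preliminary step, namely upgrading the element-wise convergence \eqref{mnu} to convergence of the type-space sums $\mathbf{g}\mathbf{M}^k\mathbf{1}^{\rm t}$, $\mathbf{M}^n\mathbf{1}^{\rm t}$ and $\mathbf{g}\mathbf{M}^k\mathbf{w}^{\rm t}$ that underlie every asymptotic above; interchanging a limit with an infinite sum over the countable type space is not licensed by \eqref{mnu} alone. I would resolve this by reading $\mathbf{g}\mathbf{M}^k\mathbf{1}^{\rm t}=\mathbb{E}(Z^{(k)})$ as the mean generation size of the associated linear-fractional CMJ-process and borrowing its $\rho^k$-asymptotics from the same renewal theorem used to prove Theorem \ref{the1}, where $\beta$ enters exactly as the renewal mean; the exact inequality $\mathbf{M}^n\mathbf{1}^{\rm t}\le1+m^{(n)}$ implied by \eqref{nex} then supplies the domination needed to transfer these asymptotics to the $\mathbf{w}$-weighted sums for bounded $\mathbf{w}$, where cancellation from possibly negative $w_j$ is handled automatically by the characteristic-function computation.
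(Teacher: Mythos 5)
Your proposal is correct and follows essentially the same route as the paper: both read off the survival probability from \eqref{nex} via the asymptotics $m^{(n)}\sim\rho^n(1+m)\beta^{-1}(\rho-1)^{-1}$ and extract the exponential limit law from the explicit conditional generating function \eqref{cor} after a first-order expansion of the denominator using $m^{(n)}\mathbf{g}^{(n)}$. The only differences are technical rather than structural: the paper obtains the $m^{(n)}$ asymptotics by applying the renewal Lemma \ref{Fe} to $\sum_{n\ge1}(Rs)^{n-1}m^{(n)}$ instead of your direct Abelian summation of the geometric series in \eqref{mn}, and it passes to the limit through moment generating functions on an interval $[0,z_0]$ (citing Curtiss) where you use characteristic functions.
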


As straightforward corollaries of Propositions \ref{p1}, \ref{p2}, \ref{p3} we get the following asymptotic results for the linear-fractional CMJ-processes with $\beta<\infty$. The survival probability 
$\mathbb P(Z^{(n)}>0)=\sum_{i\ge1} g_i\mathbb{P}\big(\mathbf{Z}^{(n)}\ne \mathbf{0}|\mathbf{Z}^{(0)}=\mathbf{e}_i\big)$
satisfies a particularly transparent asymptotical formula
\begin{align*}
 \mathbb P(Z^{(n)}>0)\sim 
\left\{
\begin{array}{ll}
e^{\alpha n}(1-\mu)(m\beta)^{-1},  &  \mbox{ if }\alpha<0,   \\
(nm)^{-1}, &   \mbox{  if }\alpha=0,  \\
 (e^\alpha-1)m^{-1},  &  \mbox{  if }\alpha>0.  
\end{array}
\right.
\end{align*}
Moreover, in the subcritical case we get a geometric conditional limit distribution 
$$\mathbb P(Z^{(n)}=k|Z^{(n)}>0)\to m^{k-1}(1+m)^{-k}, \ k\ge1,$$
in the critical case we have 
$$\mathbb P(Z^{(n)}>nx|Z^{(n)}>0)\to e^{-\beta x/(1+m)}, \quad x>0,$$
and in the supercritical case 
$$\mathbb P(Z^{(n)}>e^{\alpha n}x|Z^{(n)}>0)\to \exp\{{-x\beta (e^\alpha-1)/(1+m)}\}, \quad x>0.$$
These explicit results illuminate much more general limit theorems for the CMJ-processes available in \cite{J}, \cite{JN},  and \cite{S}.

\section{Proofs of  Theorems \ref{P2}, \ref{the1} and Propositions \ref{p1}, \ref{p2}, \ref{p3}}\label{SP}
\subsection{Proof of Theorem \ref{P2}, part 2}\label{SPP2}

In Section \ref{Scont} we have shown that $\mathbf Z^{(n)}$ has a linear-fractional distribution with unspecified parameters $(\mathbf{H}^{(n)},\mathbf{g}^{(n)},m^{(n)})$.
Turning to the proof of relations  \eqref{mn}, \eqref{tn},  and \eqref{rjn} observe first that after multiplying \eqref{mrg} by $\mathbf 1^{\rm t}$ we obtain $\mathbf M\mathbf{1}^{\rm t}=(1+m)\mathbf H\mathbf 1^{\rm t}$,
which leads to a useful reverse expression of $\mathbf{H}$ in terms of $\mathbf{M}$
\[
\mathbf{H}=\mathbf{M}-{m\over1+m}\mathbf{M}\mathbf{1}^{\rm t}\mathbf{g}.
\]
Clearly,  relation \eqref{rjn} is a straightforward counterpart of the last relation applied to the linear-fractional distribution of $\mathbf Z^{(n)}$. 

We prove \eqref{mn} using the spinal representation of the BGW-tree illustrated in Figure \ref{f3}. 
\begin{figure}
\centering
\includegraphics[
height=4cm,width=13cm]{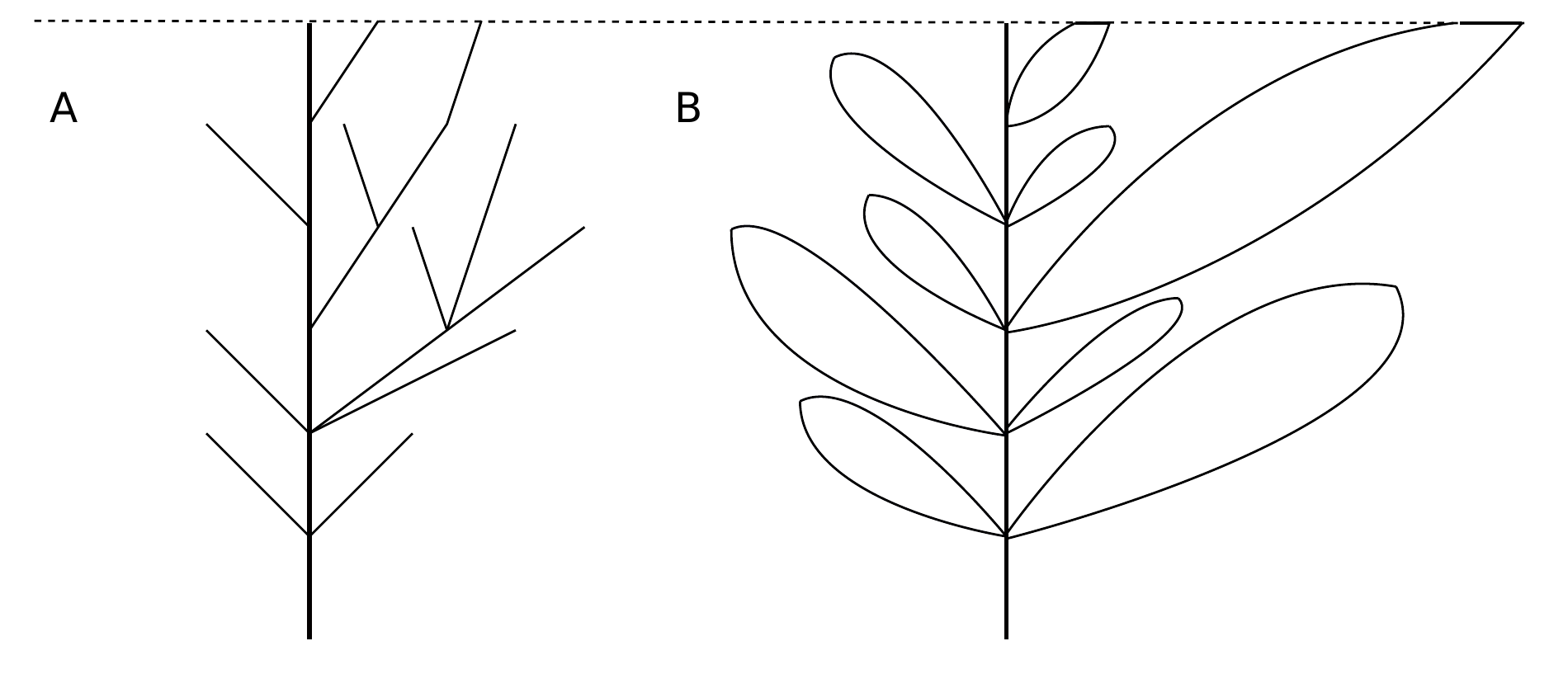}
\caption{A spinal representation of a BGW-tree reaching the observation level. {\bf A}: the spinal representation of the tree from Figure \ref{f1}. {\bf B}: a schematic view of the spinal representation, cf \cite{G}.  }
\label{f3}
\end{figure}
Suppose that $Z^{(n)}\ge1$. The corresponding spine of the planar BGW-tree is the leftmost lineage of particles reaching the level $n$. Recall that the unspecified parameter $m^{(n)}$ is the mean number of all branches present at level $n$ except the spinal one. Since this mean is the sum of contributions from all the lineages stemming  to the right
of the spine (see Figure \ref{f3}), to establish equality \eqref{mn} it suffices to see that the average number of particles stemming from the spinal particle at time $k\in[0,n-1]$ equals $m\mathbf g\mathbf M^{n-k-1}\mathbf{1}^{\rm t}$. The last assertion is a straightforward consequence of the memoryless property of geometric distribution:
\begin{itemize}
\item in the linear-fractional case at each level $k\in[0,n-1]$ there is a geometric with mean $m$ number of branches growing off the spine to the right of it,
\item every one of such daughter branching processes produces on average $\mathbf g\mathbf M^{n-k-1}\mathbf{1}^{\rm t}$ particles at time $n$.
\end{itemize}
Equality \eqref{tn} is obtained using the same argument. It is just a detailed version of  \eqref{mn} taking into account the numbers  of particles of various types existing at time $n$.

%
 
\subsection{Renewal theory argument}\label{Sr}

This section contains two lemmata used in Section \ref{Spr}. The first lemma deals with two power series $\mathbf M(s)=\sum_{n\ge0}s^n\mathbf M^n$ and  $\mathbf H(s)=\sum_{n\ge0} s^n\mathbf H^n$. 
\begin{lemma}\label{mps}
Let $f(s)=\sum_{n\ge1} d_ns^n$ with $d_n$ given by \eqref{qn}. The matrix-valued function  $\mathbf{M}(s)$  is element-wise finite if and only if $mf(s)<1$. In this case
\[
\mathbf{M}(s)=\mathbf{H}(s)+{m\over1-mf(s)}(\mathbf{H}(s)-\mathbf{I})\mathbf{1}^{\rm t}\mathbf{g}\mathbf H(s).
\]
\end{lemma}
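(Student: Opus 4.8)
The plan is to establish the stated identity by relating the matrix power series $\mathbf{M}(s)$ to $\mathbf{H}(s)$ through the fundamental relation \eqref{mrg}, namely $\mathbf{M}=\mathbf{H}+m\mathbf{H}\mathbf{1}^{\rm t}\mathbf{g}$. The key structural observation is that $\mathbf{M}$ differs from $\mathbf{H}$ by a rank-one perturbation $m\mathbf{H}\mathbf{1}^{\rm t}\mathbf{g}$, and rank-one perturbations interact nicely with matrix inverses (resolvents) via the Sherman--Morrison type identity. Since $\mathbf{M}(s)=(\mathbf{I}-s\mathbf{M})^{-1}$ and $\mathbf{H}(s)=(\mathbf{I}-s\mathbf{H})^{-1}$ whenever these inverses exist, I would first recast the problem as computing the resolvent of $s\mathbf{M}=s\mathbf{H}+sm\mathbf{H}\mathbf{1}^{\rm t}\mathbf{g}$ in terms of the resolvent of $s\mathbf{H}$.

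First I would verify the convergence criterion. Writing $\mathbf{M}^n$ in terms of $\mathbf{H}$-powers and using the fact that $d_n=\mathbf{g}\mathbf{H}^n\mathbf{1}^{\rm t}$ so that $f(s)=\sum_{n\ge1}s^n\mathbf{g}\mathbf{H}^n\mathbf{1}^{\rm t}=\mathbf{g}(\mathbf{H}(s)-\mathbf{I})\mathbf{1}^{\rm t}$, I expect the scalar quantity $mf(s)$ to emerge as the effective ``gain'' of the rank-one feedback loop along the spine. Expanding $(\mathbf{I}-s\mathbf{M})^{-1}$ as a Neumann-type series and collecting the repeated insertions of the rank-one term $sm\mathbf{H}\mathbf{1}^{\rm t}\mathbf{g}$, each insertion contributes a scalar factor $m\,\mathbf{g}(\text{powers of }s\mathbf{H})\mathbf{1}^{\rm t}$, and summing the resulting geometric series in the scalar $mf(s)$ will show finiteness precisely when $mf(s)<1$, while the divergence for $mf(s)\ge1$ follows because the $(i,i)$ diagonal terms inherit this scalar blow-up.

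For the explicit formula, the cleanest route is algebraic: multiply the claimed identity on the left by $(\mathbf{I}-s\mathbf{M})$ and check it reduces to an identity, or equivalently verify directly that the proposed right-hand side $\mathbf{H}(s)+\frac{m}{1-mf(s)}(\mathbf{H}(s)-\mathbf{I})\mathbf{1}^{\rm t}\mathbf{g}\mathbf{H}(s)$ satisfies the resolvent equation $(\mathbf{I}-s\mathbf{M})\mathbf{M}(s)=\mathbf{I}$. Using $s\mathbf{M}=s\mathbf{H}+sm\mathbf{H}\mathbf{1}^{\rm t}\mathbf{g}$ together with the basic relations $(\mathbf{I}-s\mathbf{H})\mathbf{H}(s)=\mathbf{I}$ and $\mathbf{g}(\mathbf{H}(s)-\mathbf{I})\mathbf{1}^{\rm t}=f(s)$, the cross terms should collapse after factoring out the scalar $\frac{1}{1-mf(s)}$. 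The main obstacle is purely bookkeeping rather than conceptual: one must handle the two occurrences of the rank-one factors $\mathbf{1}^{\rm t}\mathbf{g}$ carefully, tracking which matrix products reduce to the scalar $f(s)$ and which remain as the genuinely matrix-valued $\mathbf{1}^{\rm t}\mathbf{g}\mathbf{H}(s)$ term, and ensuring the element-wise (rather than operator-norm) interpretation of all infinite sums is justified by the already-established finiteness. Since all entries are non-negative, Tonelli's theorem legitimizes every rearrangement, so no delicate convergence issues arise beyond the scalar threshold $mf(s)<1$.
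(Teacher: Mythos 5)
Your overall strategy coincides with the paper's: both treat $\mathbf M=\mathbf H+m\mathbf H\mathbf 1^{\rm t}\mathbf g$ as a rank-one perturbation of $\mathbf H$, expand $\sum_{n\ge0}s^n\mathbf M^n$ according to the positions of the rank-one insertions, use $\mathbf g(\mathbf H(s)-\mathbf I)\mathbf 1^{\rm t}=f(s)$ to collapse each internal loop to the scalar $mf(s)$, sum the resulting geometric series, and invoke non-negativity (Tonelli) to justify every rearrangement. The direct expansion you sketch in your second paragraph, carried to completion, yields both the convergence criterion and the closed formula; this is exactly what the paper does, organized through the one-step identity $\mathbf M(s)=\mathbf H(s)+m(\mathbf H(s)-\mathbf I)\mathbf 1^{\rm t}\mathbf g\,\mathbf M(s)$, its $n$-fold iteration, the scalar equation $M(s)=1+f(s)+mf(s)M(s)$ for sufficiency of $mf(s)<1$, and a lower bound by $\sum_n (mf(s))^n$ for necessity.

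One step of your plan does not work as stated, though. For countably infinite matrices, $\mathbf M(s)$ is \emph{defined} as the power series $\sum_{n\ge0}s^n\mathbf M^n$, not as ``the inverse'' of $\mathbf I-s\mathbf M$, and the equation $(\mathbf I-s\mathbf M)\mathbf X=\mathbf I$ can have many non-negative solutions (the power series is only the minimal one), just as $(\mathbf I-\mathbf P)\mathbf x=\mathbf 0$ admits nontrivial non-negative solutions for a transient countable chain. So your ``cleanest route'' --- verifying that the proposed right-hand side satisfies the resolvent equation --- does not by itself identify it with $\mathbf M(s)$; you must either carry out the direct insertion expansion (your paragraph two, the paper's route) or add a minimality argument. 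A second, minor point: the claim that the $(i,i)$ diagonal entries blow up when $mf(s)\ge1$ need not hold for every $i$ (a final type with $\mathbf h_i=\mathbf 0$ has $m^{(n)}_{ii}=0$ for all $n\ge1$); the lower bound $\mathbf M(s)\ge\mathbf H(s)+m(\mathbf H(s)-\mathbf I)\mathbf 1^{\rm t}\mathbf g\mathbf H(s)\sum_{n\ge0}(mf(s))^n$ shows that \emph{some} entry is infinite, which is all the ``only if'' direction of the lemma requires.
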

\begin{proof}.
Due to  \eqref{mrg}  we have $\mathbf{M}^{n+1}=(\mathbf{H}+m\mathbf{H}\mathbf{G})\mathbf{M}^n$, where $\mathbf{G}=\mathbf{1}^{\rm t}\mathbf g$. Using induction we obtain
\begin{align*}
\mathbf{M}^{n}=\mathbf{H}^n+m\sum_{i=1}^n\mathbf{H}^i\mathbf{G}\mathbf{M}^{n-i}.
\end{align*}
Putting $\bar{\mathbf H}(s)=\mathbf H(s)-\mathbf I$ and  $M(s)=\mathbf g\mathbf M(s)\mathbf{1}^{\rm t}$ we derive first
\begin{align}
\mathbf M(s)&=\mathbf H(s)+m\bar{\mathbf H}(s)\mathbf G\mathbf M(s)
\label{MHG}
\end{align}
and then
\[ M(s)=1+f(s)+mf(s)M(s).\]
Thus if $mf(s)<1$, then $\mathbf{M}(s)$  is element-wise finite since
\be\label{mysli}
M(s)={1+f(s)\over1-mf(s)}.
\ee 

According to \eqref{MHG} we have   for all $n\ge1$ 
\begin{align*}
\mathbf M(s)=\mathbf H(s)+\sum_{k=1}^{n-1} (m\bar{\mathbf H}(s)\mathbf{G})^k\mathbf H(s)+(m\bar{\mathbf H}(s)\mathbf G)^n\mathbf M(s),
\end{align*}
and moreover
\begin{align*}
(\bar{\mathbf H}(s)\mathbf G)^n&=\left(\sum_{i=1}^\infty s^i \mathbf{H}^i\mathbf{1}^{\rm t}\mathbf{g}\right)^n\\
&=\left(\sum_{i=1}^\infty s^i \mathbf{H}^i\mathbf{1}^{\rm t}\right)\left(\sum_{i=1}^\infty s^i \mathbf{g}\mathbf{H}^i\mathbf{1}^{\rm t}\right)\ldots\left(\sum_{i=1}^\infty s^i \mathbf{g}\mathbf{H}^i\mathbf{1}^{\rm t}\right)\mathbf{g}\\
&=f^{n-1}(s)\sum_{i=1}^\infty s^i \mathbf{H}^i\mathbf{1}^{\rm t}\mathbf{g}=f^{n-1}(s)\bar{\mathbf H}(s)\mathbf G.
\end{align*}
It follows that for $s$ such that  $mf(s)<1$, the term 
$$(m\bar{\mathbf H}(s)\mathbf G)^n\mathbf M(s)=m^nf^{n-1}(s)\bar{\mathbf H}(s)\mathbf G\mathbf M(s)$$
vanishes as $n\to\infty$ and the previous two relations yield
\begin{align*}
\mathbf M(s)&=\mathbf H(s)+\sum_{n=1}^\infty (m\bar{\mathbf H}(s)\mathbf{G})^n\mathbf H(s)\\
&=\mathbf H(s)+\sum_{n=1}^\infty m^nf^{n-1}(s)\bar{\mathbf H}(s)\mathbf G\mathbf H(s)\\
&=\mathbf{H}(s)+{m\over1-mf(s)}(\mathbf{H}(s)-\mathbf{I})\mathbf{1}^{\rm t}\mathbf{g}\mathbf H(s).
\end{align*}
To finish the proof of Lemma \ref{mps} it remains to observe that
\begin{align*}
\mathbf M(s)&\ge\mathbf H(s)+\sum_{n=1}^\infty (m\bar{\mathbf H}(s)\mathbf{G})^n\mathbf H(s)\\
&=\mathbf H(s)+m\bar{\mathbf H}(s)\mathbf G\mathbf H(s)\sum_{n=0}^\infty (mf(s))^{n}
\end{align*}
implying that  $\mathbf{M}(s)$  is element-wise infinite for $s$ such that  $mf(s)\ge1$.
\end{proof}
\vspace{5mm}

The following well-known renewal theorem taken from Chapter XIII.4 in \cite{F1} will be used by us several times.

\begin{lemma} \label{Fe}
Let $A(s)=\sum_{n=0}^\infty a_ns^n$ be a probability generating function and $B(s)=\sum_{n=0}^\infty b_ns^n$ is a generating function for a non-negative sequence so that $A(1)=1$ while $B(1)\in(0,\infty)$. Then the  non-negative sequence defined by $\sum_{n=0}^\infty t_ns^n={B(s)\over1-A(s)}$ is  such that $t_n\to {B(1)\over A'(1)}$ as $n\to\infty$.
\end{lemma}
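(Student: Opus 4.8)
The plan is to recognise Lemma \ref{Fe} as the Erd\H{o}s--Feller--Pollard renewal theorem and to prove it in two stages: first treat the pure renewal case $B\equiv1$, and then recover the general statement by a convolution argument. Throughout, note that $\frac{1}{1-A(s)}=\sum_{k\ge0}A(s)^k$ has non-negative coefficients, so the sequence $(u_n)$ introduced below is genuinely non-negative.

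First I would reduce to the renewal sequence. Writing $u_n$ for the coefficients of $U(s):=(1-A(s))^{-1}=\sum_{n\ge0}u_ns^n$, the identity $B(s)U(s)=\sum_n t_ns^n$ gives the convolution
\be
t_n=\sum_{j=0}^{n} b_j u_{n-j}.
\ee
The sequence $(u_k)$ is uniformly bounded: after normalising away a possible atom $a_0$ at the origin, each $u_k$ is the probability that $k$ is a renewal epoch and hence lies in $[0,1]$. Since moreover $\sum_{j}b_j=B(1)<\infty$, the summands $b_j u_{n-j}$ are dominated by the summable sequence $(b_j)$. Consequently, once the convergence $u_k\to 1/A'(1)$ is established, dominated convergence in the index $j$ yields
\be
t_n\to\Big(\sum_{j\ge0}b_j\Big)\frac{1}{A'(1)}=\frac{B(1)}{A'(1)},
\ee
so that the whole content of the lemma is the convergence of the renewal sequence $u_n$.

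Second I would prove $u_n\to 1/A'(1)$, where the right-hand side is read as $0$ when $A'(1)=\infty$. The Abelian side is immediate: monotone convergence gives $(1-A(s))/(1-s)\uparrow A'(1)$ as $s\uparrow1$, hence $(1-s)U(s)\to 1/A'(1)$, and a Hardy--Littlewood Tauberian theorem (applicable because $u_n\ge0$) then delivers only the Ces\`aro statement $n^{-1}\sum_{k<n}u_k\to 1/A'(1)$. Upgrading this averaged convergence to genuine pointwise convergence of $u_n$ is the crux. Here I would run the ordinary renewal process with step law $(a_n)$ alongside its stationary delayed version, whose renewal indicator has constant mean $1/A'(1)$; provided the support of $(a_n)$ is aperiodic the two processes almost surely meet at a common renewal epoch, and after meeting their renewal indicators coincide, which forces $|u_n-1/A'(1)|\to0$. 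In the infinite-mean case the stationary version degenerates, and I would instead truncate the step law, let the truncation level tend to infinity, and conclude $\limsup_n u_n=0$.

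The step I expect to be the main obstacle is exactly this passage from Ces\`aro to pointwise convergence, and it is also where the (tacit but indispensable) aperiodicity assumption enters. In the present application the step sequence is $\hat d_n=md_ne^{-\alpha n}$, a proper probability law with mean $A'(1)=\beta$, and its aperiodicity is inherited from the assumed aperiodicity of $\mathbf M$; with aperiodicity secured the coupling succeeds and the lemma follows.
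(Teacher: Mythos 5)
Your argument is essentially correct, but there is no proof in the paper to measure it against: the author introduces Lemma \ref{Fe} as ``the following well-known renewal theorem taken from Chapter XIII.4 in \cite{F1}'' and uses it as a black box, so what you have written is a standalone proof of the Erd\H{o}s--Feller--Pollard theorem. Your reduction to the pure renewal sequence is the standard one and is carried out correctly: $t_n=\sum_{j\le n}b_ju_{n-j}$ with $(u_k)$ bounded by $(1-a_0)^{-1}$ after the atom at the origin is normalised away, and $\sum_j b_j<\infty$, so dominated convergence transfers the limit from $u_n$ to $t_n$. For the core statement $u_n\to 1/A'(1)$ you take the coupling route (ordinary versus stationary delayed renewal process), which is valid and is by now the textbook argument; Feller's own proof, the one the paper implicitly relies on, is instead purely analytic, exploiting the identity $\sum_{k=0}^{n}u_k\,\overline a_{n-k}=1$ with $\overline a_j=\sum_{i>j}a_i$ together with a selection argument on $\limsup_n u_n$ and $\liminf_n u_n$. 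That analytic argument has the advantage of disposing of the infinite-mean case directly, which is the one spot where your sketch is thinnest: the truncation step needs a monotone comparison between renewal sequences (or the identity above) to be airtight. The most useful part of your write-up is the observation that the lemma as literally stated is false without aperiodicity of the support of $(a_n)$ --- take $A(s)=s^2$, $B(s)=1$, so that $t_n$ alternates between $1$ and $0$ rather than converging to $1/2$ --- together with the verification that in every application in the paper the step law is $\hat d_n=md_nR^n$, whose aperiodicity is indeed forced by the assumed irreducibility and aperiodicity of $\mathbf M$, since $\mathbf g\mathbf M^n\mathbf 1^{\rm t}$, with generating function $(1+f(s))/(1-mf(s))$, must be positive for all large $n$.
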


\subsection{Proof of Theorem \ref{the1}}\label{Spr}

We show first that if there are no phantom types and $\mathbf{H}$ has no zero rows, then for any $j$ there exists such $n=n_j$ that $m^{(n)}_{ij}>0$ for all $i$. This easily follows from the inequality
$\mathbf{M}^{n}\ge m\mathbf{H}\mathbf{1}^{\rm t}\mathbf g\mathbf{H}^{n-1}$ which comes from \eqref{MHG}.
Indeed, on one hand, all components of the vector $\mathbf{H}\mathbf{1}^{\rm t}$ are positive. On the other hand, the absence of phantom types implies that for the given $j$ we can find such $n=n_j\ge2$ that the $j$-th component of the  vector $\mathbf g\mathbf{H}^{n-1}$ is positive.

Assume from now on that $\mathbf M$ is irreducible and aperiodic. Statements  (i) and  (ii) of Theorem \ref{the1} follow directly from Lemma \ref{mps}.

 Assertion  (iii) is easily obtained by combining   \eqref{mysli} and Lemma \ref{Fe}. Using
 $$R^n\mathbf g\mathbf M^n\mathbf{1}^{\rm t}\to{1+f(R)\over mf'(R)}={1+m\over \beta m}$$
we conclude that  \eqref{nu} is possible if and only if   $\beta=\infty$. 

We show next that $\mathbf{H}(R)$ is element-wise finite provided $\alpha>-\infty$. First notice that $\mathbf g\mathbf{H}(R)\mathbf{1}^{\rm t}=f(R)=1/m$. On the other hand, in the absence of phantom types for any $i\ge1$ we can find a $k=k_i$ and a positive $c_i$ such that $\mathbf g\mathbf{H}^k\ge c_i\mathbf e_i$ implying
\[ c_i\mathbf e_i \mathbf{H}(R)\le \sum_{n=0}^\infty R^n\mathbf g\mathbf{H}^{k+n}\le R^{-k}\sum_{n=0}^\infty R^n\mathbf g\mathbf{H}^{n}=R^{-k}\mathbf g\mathbf{H}(R)\]
so that $\mathbf e_i \mathbf{H}(R)\mathbf{1}^{\rm t}\le c_i^{-1}R^{-k}m^{-1}<\infty$.

Now let $\beta<\infty$. Consider vectors $\mathbf u$ and $\mathbf v$ which, thanks to the just proved finiteness of $\mathbf{H}(R)$, are well-defined by \eqref{muu} and \eqref{muv}. The claimed equality $\mathbf v\mathbf u^{\rm t}=1$ follows from
\[
m\mathbf{g}\mathbf{H}(R)(\mathbf{H}(R)-\mathbf I)\mathbf{1}^{\rm t}=m\sum_{n=1}^\infty n R^n\mathbf{g}\mathbf H^n\mathbf{1}^{\rm t}=\beta,
\]
which is a consequence of
\begin{align*}
\mathbf{H}(s)\mathbf{H}(s)-\mathbf{H}(s)&=\sum_{i=1}^\infty \mathbf{H}^is^i\sum_{k=0}^\infty \mathbf{H}^ks^k\\
&=\sum_{i=1}^\infty \sum_{n=i}^\infty \mathbf{H}^ns^n=\sum_{n=1}^\infty \sum_{i=1}^n\mathbf{H}^ns^n=\sum_{n=1}^\infty n\mathbf{H}^ns^n.
\end{align*}
It remains to prove \eqref{mnu}. To this end  define a sequence of matrices $\mathbf B_n$ by 
\begin{align*}
\sum_{n=0}^\infty\mathbf B_ns^n&={m\over1-mf(Rs)}(\mathbf{H}(Rs)-\mathbf I)\mathbf{1}^{\rm t}\mathbf g\mathbf{H}(Rs),\quad s\in[0,1)
\end{align*}
so that $R^n\mathbf M^n=R^n\mathbf H^n+\mathbf B_n$ due to Lemma \ref{mps}.
According to Lemma \ref{Fe} we have an element-wise convergence
$\mathbf B_n\to  \mathbf u^{\rm t}\mathbf v$ as $n\to\infty$ and it remains to see that each element of $R^n\mathbf H^n$ converges to zero, since $\mathbf{H}(R)=\sum_{n\ge0}R^n\mathbf H^n$ is element-wise finite.

\subsection{Proof of Propositions \ref{p1}, \ref{p2}, and \ref{p3}}\label{SPPr}

\begin{proof} {\sc of Proposition \ref{p1}}. From \eqref{mn} and  \eqref{mysli} we obtain 
$$m^{(n)}\to m{1+f(1)\over1-mf(1)}=\tilde m$$ 
which together with  \eqref{nex} implies \eqref{epr} .
The statement on the convergence of the conditional distribution of $\mathbf{Z}^{(n)}$ follows from \eqref{cor}:
\[\mathbb{E}\big(\mathbf{s}^{\mathbf{Z}^{(n)}}|\mathbf{Z}^{(n)}\ne \mathbf{0},\mathbf{Z}^{(0)}=\mathbf{e}_i\big)\to{\sum_{j=1}^\infty \tilde h_{j}s_j\over1+\tilde m-\tilde m\sum_{j=1}^\infty \tilde g_js_j},\]
since $m^{(n)}\mathbf g^{(n)}\to m\mathbf g(\mathbf{I}-\mathbf{M})^{-1}=\tilde m\tilde{\mathbf g}$ and
\begin{align*}
\mathbf H^{(n)}&\sim\rho^n(\mathbf{u}^{\rm t}\mathbf{v}-(1-\mu)(1+m)^{-1}\mathbf{u}^{\rm t}\tilde m\tilde{\mathbf g})=\rho^n(1-\mu)(1+m)^{-1}\mathbf{u}^{\rm t}\tilde{\mathbf h}.
\end{align*}
\end{proof}
\begin{proof} {\sc of Proposition \ref{p2}}.
Lemma \ref{Fe} and relations  \eqref{mn}, \eqref{mysli} imply that in the critical case  
\[
m^{(n)}\sim n(1+m)\beta^{-1}.
\]
Thus the stated asymptotics for the survival probability follows from \eqref{nex}.
Using from \eqref{cor} we express the conditional moment generating function as
\begin{align*}
\mathbb E\left(e^{zn^{-1} \mathbf{Z}^{(n)}\mathbf{w}^{\rm t}}|\mathbf{Z}^{(n)}\ne \mathbf{0},\mathbf{Z}^{(0)}=\mathbf{e}_i\right)
={(1-h_{i0}^{(n)})^{-1}\sum_{j=1}^\infty h_{ij}^{(n)}e^{zw_j/n} \over 1+m^{(n)}-m^{(n)}\sum_{j=1}^\infty g_j^{(n)}e^{zw_j/n} }.
\end{align*}
Since $m^{(n)}\mathbf{g}^{(n)}\sim n(1+m)\beta^{-1}\mathbf v$, we obtain
\begin{align*}
(1-h_{i0}^{(n)})^{-1}\sum_{j=1}^\infty h_{ij}^{(n)}e^{zw_j/n} &\to1,\\
m^{(n)}\sum_{j=1}^\infty g_j^{(n)}(e^{zw_j/n}-1) &\to z(1+m)\beta^{-1}\mathbf{v}\mathbf{w}^{\rm t},
\end{align*}
and the asserted weak convergence follows from the convergence of moment generating functions
\begin{align*}
\mathbb E\left(e^{zn^{-1} \mathbf{Z}^{(n)}\mathbf{w}^{\rm t}}|\mathbf{Z}^{(n)}\ne \mathbf{0},\mathbf{Z}^{(0)}=\mathbf{e}_i\right)\to{1\over1-z(1+m)\beta^{-1}\mathbf{v}\mathbf{w}^{\rm t}}
\end{align*}
for all $z\in[0,z_0]$, where $z_0$ is some positive number (see \cite{C}).

\end{proof}
\begin{proof} {\sc of Proposition \ref{p3}}. Rewrite \eqref{mn} as 
$$R^{n-1}m^{(n)}=m\sum_{k=0}^{n-1}R^kR^{n-1-k}\mathbf g\mathbf M^k\mathbf{1}^{\rm t}$$
to obtain the following consequence of \eqref{mysli}
\begin{align*}
\sum_{n=1}^\infty (Rs)^{n-1}m^{(n)}={m(1+f(sR))\over (1-mf(sR))(1-Rs)}.
\end{align*}
Thus Lemma \ref{Fe} entails
\[
m^{(n)}\sim \rho^n(1+m)\beta^{-1}(\rho-1)^{-1}.
\]
This together with \eqref{mnu} and  \eqref{nex} gives the stated formula for the survival probability. The assertion on weak convergence is proved in a similar way as in the critical case above.
\end{proof}

\section*{Acknowledgements}

Several insightful comments and constructive suggestions of two anonymous referees have helped the author to significantly improve the presentation of the paper making the assertions more precise and the proofs more rigorous. This work was supported by the Swedish Research Council grant 621-2010-5623.


\begin{thebibliography}{00}

\bibitem{A} {\sc Asmussen, S.} (2003) {\em Applied probability and queues},  Springer, New York.
\bibitem{AK} {\sc Athreya, K. and Kang, H.} (1998). Some limit theorems for positive recurrent Markov Chains I and II.  {\em Adv. in Appl. Probab. } 30, 693--722.
\bibitem{AN} {\sc Athreya, K. and Ney, P.} (1972) {\em Branching processes},  John Wiley \& Sons, London-New York-Sydney.
\bibitem{BL} {\sc Barbour, A. and Luczak, M.} (2008) Laws of large numbers of epidemic models with countably many types.  {\em Ann. Appl. Probab.} 18, 2208--2238.
\bibitem{C} {\sc Curtiss, J. H. } (1942) A Note on the Theory of Moment Generating Functions.  {\em Ann. Math. Stat.} 13, 430--433.
\bibitem{F1}{\sc Feller, W.} (1959). {\em An introduction to probability theory and its applications}, Vol I, 2nd ed. John Wiley \& Sons, London-New York-Sydney.
\bibitem{F2}{\sc Feller, W.} (1971). {\em An introduction to probability theory and its applications}, Vol II, 3rd ed. John Wiley \& Sons, London-New York-Sydney.
\bibitem{G}{\sc Geiger, J.} (1999). Elementary new proofs of classical limit theorems for Galton-Watson processes. {\em J. Appl. Probab.} 36, 301--309.
\bibitem{GK} {\sc Geiger, J. and Kersting, G.} (1997). Depth-first search of random trees, and Poisson point processes. In {\em Classical and Modern Branching Processes} (Minneapolis, MN, 1994). IMA Math. Appl. 84, pp. 111--126. Springer, New York.

\bibitem{J} {\sc Jagers, P. } (1975) {\em Branching processes with biological applications}, Wiley, New-York. 
\bibitem{JN} {\sc Jagers, P. and Nerman, O.} (1984) The growth and composition of branching populations. {\em Adv. in Appl. Probab. } 16, 221--259.

\bibitem{JS} {\sc Jagers, P. and Sagitov, S.} (2008) General branching processes in discrete time as random trees. {\em Bernoulli} 14, 949--962. 

\bibitem{JL} {\sc Joffe, A. and Letac, G.}  (2006) Multitype linear fractional branching processes. {\em J. Appl. Probab.} 43, 1091--1106. 

\bibitem{HJV} {\sc Haccou, P., Jagers, P., Vatutin, V.A.} (2005) {\em Branching Processes: Variation, Growth and Extinction of Populations}, Cambridge University Press, Cambridge.

\bibitem{H} {\sc Hoppe, FM.} (1997) Coupling and the Non-degeneracy of the Limit in Some Plasmid Reproduction Models. {\em Theor. Popul. Biol.}  52, 27--31.

\bibitem{K} {\sc Kesten, H.} (1989) Supercritical branching processes with countably many types and the sizes of random cantor sets. In {\em Probability, Statistics and Mathematics. Papers in Honor of Samuel Karlin}, pp. 108--121,  Academic Press, New York.
\bibitem{KA} {\sc Kimmel, M. and Axelrod, D.} (2002) {\em Branching Processes in Biology},   Springer, New York.

\bibitem{AL} {\sc Lambert, A.} (2010)
The contour of splitting trees is a Levy process. {\em Ann. Probab.} 38, 348--395.
\bibitem{M} {\sc Moy, S.-T. C.} (1967) Extensions of a limit theorem of Everett Ulam and Harrison multi-type
branching processes to a branching process with countably many types. {\em Ann. Math. Statist.} 38. 992-999.
\bibitem{AP} {\sc Pakes, A.G.} (2003) Biological Applications of Branching Processes. In {\em Handbook of Statistics} 21, pp. 693--773, Elsevier Science, Amsterdam, Netherlands.
\bibitem{P} {\sc Pollak, E.} (1974) Survival probabilities and extinction times for some multitype branching processes. {\em Adv. Appl. Prob.} 6, 446--462.
\bibitem{S} {\sc Sagitov, S.} (1995) A key limit theorem for critical branching processes. {\em  Stoch. Proc. Appl. }56, 87--100.
\bibitem{ES} {\sc Seneta, E. } (2006). {\em Non-negative matrices and Markov chains}, Springer Series in Statistics No. 21, Springer, New-York. 
\bibitem{ST} {\sc Seneta, E. and Tavare, S.}  (1983) Some stochastic models for plasmid copy number. {\em Theor. Popul. Biol.} 23, 241--256.

\end{thebibliography}
\end{document}